\theoremstyle{plain}
\newtheorem{theorem}[equation]{Theorem}
\newtheorem{proposition}[equation]{Proposition}
\newtheorem{lemma}[equation]{Lemma}
\newtheorem{definition}[equation]{Definition}
\theoremstyle{remark}
\newtheorem{remark}[equation]{Remark}
\numberwithin{equation}{section}
\newcommand{\dbar}{\bar \partial}
\newcommand{\ch}{{\mathcal H}}
\newcommand{\cm}{{\mathcal M}}
\newcommand{\co}{{\mathcal O}}
\newcommand{\cv}{{\mathcal V}}
\newcommand{\sr}{{\mathscr R}}
\newcommand{\sS}{{\mathscr S}}
\newcommand{\C}{{\mathbb C}}
\newcommand{\Z}{{\mathbb Z}}
\begin{document}

\title[Regular vs. singular order]{Regular versus singular order of contact on pseudoconvex hypersurfaces}
\author{J.D. McNeal \& L. Mernik}
\subjclass[2010]{32W05}

\begin{abstract}

The singular and regular type of a point on a real hypersurface $\ch$ in $\C^n$ are shown to agree when the regular type is strictly less than 4.
If $\ch$ is pseudoconvex, we show they agree when the regular type is 4. A non-pseudoconvex example is given 
where the regular type is 4 and the singular type is infinite.
\end{abstract}
 
\thanks{Research of the first author was partially supported by a National Science Foundation grant.}
\address{Department of Mathematics, \newline The Ohio State University, Columbus, Ohio, USA}
\email{mcneal@math.ohio-state.edu}
\address{Department of Mathematics, \newline The Ohio State University, Columbus, Ohio, USA}
\email{mernik.1@osu.edu}

\maketitle 


\section{Introduction}\label{S:intro}

Let $\ch\subset\C^n$ be a smooth, real hypersurface. D'Angelo introduced, \cite{DAngelo82}, a measurement of the holomorphic flatness
of $\ch$ at a point $p\in\ch$.  Let $\sS$ denote the set of parameterized non-constant holomorphic curves  $\gamma: V\longrightarrow\C^n$ with  $\gamma(0)=p$, where $V\subset\C$ is an unspecified neighborhood of $0$. Consider a local defining function $r$ for $\ch$: for some neighborhood $U$ of $p$, $\ch\cap U=\{z\in U: r(z) =0\}$ and $dr\neq 0$ in $U$.
Let $\nu(r\circ\gamma)=\nu(r\circ\gamma)(0)$ denote the order of vanishing of the real-valued function $r\circ\gamma$ at 0.  Also let $\nu(\gamma)$ denote the multiplicity of $\gamma$ at 0, i.e. the unique $M\in\Z^+$ such that 

$$\lim_{t\to 0}\frac{\gamma(t)-\gamma(0)}{t^M}\quad\text{exists and is  }\neq (0,\dots, 0).$$ 
Let $\sr\subset\sS$ denote the set of curves with $\nu(\gamma)=1$.

\medskip

\begin{definition} \label{D:type} For a given $\gamma\in \sS$, the order of contact of $\gamma$ with $\ch$ at $p$ is the (possibly infinite) quantity
$O(\gamma;p)= \frac{\nu(r\circ\gamma)}{\nu(\gamma)}$. The singular type of $p\in\ch$ is defined

\begin{equation}\label{D:sing_type}
\Delta_1(p)=\sup_{\gamma\in\sS} O(\gamma;p) .
\end{equation}
If $\Delta_1(p)<\infty$, $p$ is of finite singular type.

The regular type of $p\in\ch$ is defined

\begin{equation}\label{D:reg_type}
\Delta_1^{reg}(p)=\sup_{\gamma\in\sr}{\nu(r\circ\gamma)}.
\end{equation}
If $\Delta_1^{reg}(p)<\infty$, $p$ is of finite regular type.
\end{definition}
\medskip

The purpose of this paper is to prove 

\begin{theorem}\label{T:main}
Let $\ch\subset\C^n$ be a smooth real hypersurface and $p\in \ch$. 

\begin{itemize}
\item[(i)] If $\Delta_1^{reg}(p)\leq 3$, then $\Delta_1(p)\leq 3$ and $\Delta_1(p)= \Delta_1^{reg}(p)$.
\item[(ii)] If $\ch$ is pseudoconvex near $p$ and $\Delta_1^{reg}(p)= 4$, then  $\Delta_1(p)=4$.
\end{itemize}
\end{theorem}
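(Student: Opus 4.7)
The plan is to argue by contradiction, working in carefully chosen local coordinates and tracking the vanishing of the bi-degree Taylor coefficients of $r\circ\gamma$. Translate $p$ to the origin and select coordinates $(z_1,\dots,z_{n-1},w)=(z',w)$ so that
\[
r(z',w)=2\operatorname{Re}(w)+P(z',\bar z')+(\text{terms carrying a factor of }w),
\]
with $P$ free of pluriharmonic monomials. Decompose $P=P_2+P_3+P_4+\cdots$ into bihomogeneous pieces; $P_2$ represents the Levi form, and $P_3(z,\bar z)=B(z,z,\bar z)+\overline{B(z,z,\bar z)}$ for a trilinear form $B$ linear in the first two slots and conjugate-linear in the third. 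The terms involving $w$ contribute to $r\circ\gamma$ at orders $\geq \nu(\gamma_n)$, and in the scenarios to be ruled out $\nu(\gamma_n)$ is forced to exceed the threshold of interest, so these terms can be set aside.

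For part (i), assume $\Delta_1^{\mathrm{reg}}(p)\le 3$ and suppose, for contradiction, some $\gamma\in\sS$ attains $O(\gamma;p)>3$. Setting $M=\nu(\gamma)$ and $\gamma(t)=\sum_{k\ge M}e_k t^k$ with $e_M\ne 0$, the hypothesis $\nu(r\circ\gamma)>3M$ forces $\nu(\gamma_n)>3M$, so $v:=e_M'\in\C^{n-1}\setminus\{0\}$, and it forces the bi-degree $(j,k)$ coefficient of $P(\gamma',\bar\gamma')$ to vanish for every $j+k\leq 3M$. The $(M,M)$-coefficient is $P_2(v,\bar v)$, so $v$ is isotropic for the Levi form; the $(M+j,M+k)$-coefficients with $j+k\le M-1$ give intermediate constraints of the shape $P_2(e_{M+j}',\overline{e_{M+k}'})=0$; and the $(2M,M)$-coefficient is $P_2(e_{2M}',\bar v)+B(v,v,\bar v)$. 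Since $\Delta_1^{\mathrm{reg}}(p)\le 3$, the restriction of $P$ to the complex line through $v$ has vanishing order at most $3$, which together with $P_2(v,\bar v)=0$ forces $B(v,v,\bar v)\ne 0$; the last constraint then demands $P_2(\cdot,\bar v)\not\equiv 0$, i.e., $v$ is isotropic for $P_2$ without lying in its radical.

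Next, I would use the algebraic data $(v,e_{M+1}',\dots,e_{2M}')$ compatible with this recursion to manufacture a \emph{nonsingular} holomorphic curve $\delta(s)=vs+\sum_{j\ge 2}\phi_j s^j$, whose higher coefficients $\phi_j$ are assembled from the $e_l'$ so that the same cancellation pattern forces $P(\delta,\bar\delta)$ to vanish through order $3$ in $(s,\bar s)$. That produces $\nu(r\circ\delta)\ge 4 > \Delta_1^{\mathrm{reg}}(p)$, the desired contradiction; the equality $\Delta_1(p)=\Delta_1^{\mathrm{reg}}(p)$ then follows since the reverse inequality is tautological.

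For part (ii), pseudoconvexity gives $P_2\ge 0$, and the Cauchy-Schwarz inequality for positive semi-definite Hermitian forms upgrades $P_2(v,\bar v)=0$ to $P_2(\cdot,\bar v)\equiv 0$, so every Levi-null direction lies in the radical; this alone rules out $O(\gamma;p)>3$ via the part (i) analysis. For the new regime $O(\gamma;p)>4$ the order-by-order analysis is pushed through bi-degrees $(2M,2M)$ and $(3M,M)$, bringing $P_4$ into the constraint system; positivity of $P_2$ together with the nonvanishing of the relevant $P_4$-contribution along $v$ (guaranteed by $\Delta_1^{\mathrm{reg}}(p)=4$) closes the system in the same way. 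The principal obstacle in both parts is the "unwinding" step that turns the recursive algebraic data for a singular curve into the Taylor coefficients of an honest nonsingular curve with matching high-order contact; pseudoconvexity is exactly what makes this step succeed at the regular-type-$4$ threshold, as the non-pseudoconvex example cited in the abstract shows the construction collapses without it.
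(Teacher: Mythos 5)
Your overall strategy -- put $r$ in normal form, expand $r\circ\gamma$ in bi-degrees, and reassemble a nonsingular curve from the multiples-of-$M$ jet of $\gamma$ -- is the paper's strategy, and your part (i) sketch is essentially Proposition~\ref{P:low_type}: the derivatives $D^{M,0}$, $D^{M,M}$, $D^{2M,M}$ of $r\circ\gamma$ involve only the coefficients $c_0, c_M, c_{2M}$, so defining $\delta(s)=c_0 s + c_M s^2$ (and $+c_{2M}s^3$ for the type-$3$ bound) transfers the vanishing directly. The parenthetical digression about $B(v,v,\bar v)\ne 0$ and $P_2(\cdot,\bar v)\not\equiv 0$ is not needed and is slightly backwards, but the construction you then describe is correct.

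Your part (ii), however, has a genuine gap: you never identify the obstruction that makes the type-$4$ case hard, and your two proposed ingredients do not reach it. The matching argument fails at bi-degree $(3M,M)$ because $D^{3M,M}[r\circ\gamma](0)$ contains, via the $\nabla^{2,1}$ piece, terms of the form $\lambda_{jk\ell}\, c_u^j\, c_{M-u}^k\, \bar c_0^\ell$ for $0<u<M$ -- the paper's $B(r;\gamma)$ in \eqref{E:(3M,M)_para}. These involve \emph{intermediate} jet coefficients $c_u$ that have no counterpart in $D^{3,1}[r\circ\zeta]$ for any nonsingular $\zeta$ built from $c_0, c_M, c_{2M}$, so the transfer breaks unless $B(r;\gamma)=0$. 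Your Cauchy--Schwarz observation (that a Levi-null $v$ lies in the radical of $P_2$) constrains $P_2$ against $\bar v=\bar c_0$, but $B(r;\gamma)$ is built from coefficients of $P_3$, not $P_2$, and from the middle coefficients $c_u, c_{M-u}$, not $c_0$; likewise ``nonvanishing of the $P_4$-contribution along $v$'' says nothing about these $P_3$-with-mixed-jet products. The paper's actual mechanism is two-step and uses pseudoconvexity twice, after diagonalizing the Levi form: Lemma~\ref{L:1} shows (via D'Angelo's parity/positivity result, Proposition~\ref{P:psc_dangelo}) that $\lambda_{j_0 k_0\ell_0}\ne 0$ forces one of $\kappa_{j_0},\kappa_{k_0},\kappa_{\ell_0}$ to be strictly positive -- otherwise a curve in those coordinate directions would have odd contact $3$; and Lemma~\ref{L:2} shows that a positive $\kappa_s$ forces $c_i^s=0$ for all $i<M$, since semi-definiteness means the resulting $|t|^{2(M+i)}$ term cannot be cancelled, contradicting $\nu(r\circ\gamma)>4M$. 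Together these kill every summand of $B(r;\gamma)$. Without isolating this obstruction and supplying an argument of this kind, the ``unwinding'' step you acknowledge as the principal obstacle remains unaddressed.
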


The ratio defining $O(\gamma;p)$ implies this quantity is unchanged if the parameter variable $t$ is changed, e.g., to $t^k, k\in\Z^+$.
The subscript in $\Delta_1(p)$ indicates that orders of contact with 1-dimensional curves are considered. There are analogous higher-type conditions, denoted $\Delta_q(p)$ for $q=2,3,\dots , n$, discussed in \cite{DAngelo_SCVRHS}, but these conditions are more complicated to define. Finally, $\Delta_1(p)$ does not depend on the choice of defining function $r$; see \cite{DAngelo_SCVRHS}, Proposition 5 on page 114 for a proof.

The hypersurface $\ch$ can be viewed as the boundary $b\Omega$ of a smoothly bounded domain $\Omega\subset\C^n$. For function theory,  $\Delta_1(p)$ is important because it controls quantitative behavior of holomorphic functions on $\Omega$ near $p$; see \cite{DAngelo_SCVRHS} and its bibliography for results of this kind. Many analytic estimates in terms of  
$\Delta_1(p)$ have been obtained, but the story is far from complete. A remarkable result is obtained in \cite{Cat87}: on a smoothly bounded pseudoconvex domain, $\Delta_1(p)<\infty$ for $p\in b\Omega$ is a necessary and sufficient condition for a subelliptic estimate on the $\dbar$-Neumann problem to hold near $p$.

These connections with function theory motivate studying type but are unrelated to computing either $\Delta_1(p)$ or $\Delta_1^{reg}(p)$.
Both quantities depend solely on the germ at $p$ of the hypersurface $\{r=0\}$.

Since $\sr\subset\sS$,  $\Delta_1^{reg}(p)\leq \Delta_1(p)$ for any hypersurface. In general no other relationship between $\Delta_1^{reg}(p)$ and $\Delta_1(p)$ holds. For instance, consider the surface in $\C^3$ defined by $$r(z)=\text{Re }z_1 + \left|z_2^2-z_3^3\right|^2$$
near the origin. This example was considered in \cite{DAngelo79}. It is straightforward to check that $\Delta_1^{reg}(0)=6$. But the curve $t\longrightarrow (0, t^3, t^2)$ is contained in $\{r=0\}$, so $\Delta_1(0) =\infty$.
\smallskip

However there are hypotheses on $\ch$ that imply $\Delta_1^{reg}(p)= \Delta_1(p)$. When $\ch\subset\C^2$ this identity holds; see Theorem 9 on page 142 of \cite{DAngelo_SCVRHS}. In Proposition \ref{P:low_type} of Section \ref{S:jets}, we prove the measurements agree if $\Delta_1^{reg}(p)$ equals $2$ or $3$. When $\ch$ is pseudoconvex, the case $\Delta_1^{reg}(p)=2$ was known previously; see \cite{Kohn79}.
Results for more degenerate situations, i.e. when $\Delta_1^{reg}(p)> 3$, are unknown except for one class of hypersurfaces: it was shown in
 \cite{McNeal92} that $\Delta_1^{reg}(p)= \Delta_1(p)$, regardless of the size of $\Delta_1^{reg}(p)$, if $\ch$ locally bounds a {\it convex} domain. A geometric proof and generalization of this fact is given in \cite{BoasStraube92}.

In  \cite{DAngelo_SCVRHS},  page 148 in Proposition 3, it is stated that $\Delta_1^{reg}(p)= \Delta_1(p)$ if $\Delta_1^{reg}(p) =4$ on an arbitrary smooth real hypersurface. This turns out to be incorrect -- see Section \ref{S:example} for a counterexample. 
The main point of Theorem \ref{T:main} is that with the additional hypothesis of pseudoconvexity the conclusion is correct.
 
In view of  \cite{McNeal92} and Theorem \ref{T:main} (ii), there may be conditions -- intermediate between convexity and pseudoconvexity -- that imply $\Delta_1^{reg}(p)= \Delta_1(p)$ for points of type higher than 4. 
See Remark \ref{R:higher} for the obstructions to be controlled.

\section{Notation}\label{S:notation}

For curves $\gamma\in\sS$, 
let $t\in\Bbb{C}$ denote the parameter variable and write
$\gamma(t)=\left(\gamma^1(t), \dots , \gamma^n(t)\right)$ to indicate components. 

Derivatives will be denoted in several ways.
For functions defined on $\C^n$ or $\C$, subscripts will denote derivatives for small number of derivatives, e.g. $r_{z_j}$ or $\gamma^k_t$. For higher derivatives, the notation will  depend on which
function is differentiated.

For the function $r\circ\gamma$,

\begin{equation}\label{E:D_notation}
D^{a,b}[r\circ\gamma](t)=\frac{\partial^a}{\partial t^a}\frac{\partial^b}{\partial\bar t^b} (r\circ\gamma)(t)
\end{equation}
will distinguish $t$ and $\bar t$ derivatives. For the vector-valued function $\gamma$, the notation

\begin{equation*}
\left(\partial^k\gamma\right)(t)=\left(\frac{\partial^k}{\partial t^k} \gamma^1(t),\dots , \frac{\partial^k}{\partial t^k} \gamma^n(t)\right)\quad\text{and}\quad\left(\dbar^k \bar\gamma\right)(t)=\left(\frac{\partial^k}{\partial\bar t^k} \bar\gamma^1(t),\dots , \frac{\partial^k}{\partial\bar t^k} \bar\gamma^n(t)\right)
\end{equation*}
is used. 

When differentiating the defining function $r$, the following notation will be used. Let $a,b\in\Z^+$. Let  $X^k(t)=\left(X^k_1(t), \dots ,X^k_n(t)\right), 1\leq k\leq a$, and $Y^\ell (t)=\left(Y^\ell_1(t), \dots ,Y^\ell_n(t)\right), 1\leq\ell\leq b$, be given
vector-valued functions. Set

\begin{align}\label{E:nabla_notation}
\nabla^{a,b}[r]\left(X^1,\dots , X^a, Y^1,\dots , Y^b\right)(t)&= \notag\\
\sum_{j_1=1}^n\dots \sum_{j_a=1}^n \sum_{k_1=1}^n\dots &\sum_{k_b=1}^n r_{z_{j_1}\dots z_{j_a}\bar z_{k_1}\dots\bar z_{k_b}}\left(\gamma(t)\right) X^1_{j_1}(t)\dots X^a_{j_a}(t)\, Y^1_{k_1}(t)\dots Y^b_{k_b}(t)
\end{align}
as a definition of the left-hand side. The symbol $\nabla^{a,b}[r]$ is assigned no independent meaning; it will appear only via the action described by \eqref{E:nabla_notation}.

Standard multi-index notation is also used. For $\alpha=(\alpha_1,\dots ,\alpha_n)\in\left(\Z^+\right)^n$ and $h$ a function of $z\in\C^n$,

\begin{equation*}
\left(\frac\partial{\partial z}\right)^\alpha h(z)= \left(\frac\partial{\partial z_1}\right)^{\alpha_1}\cdots\left(\frac\partial{\partial z_n}\right)^{\alpha_n} h(z);
\end{equation*}
$\left(\frac\partial{\partial\bar z}\right)^\alpha$ is defined analogously.

When displaying derivatives, the underlying variables -- always $z\in\C^n$, $t\in\C$, or $\gamma(t)\in\C^n$ -- will be suppressed for notational economy. If variables do not appear in an equation, 
our implied meaning is that the equation holds functionally. To avoid confusion with this,
evaluation of derivative expressions at 0 will be explicitly indicated.


\section{An example}\label{S:example}

Consider the domain $\Omega=\left\{z\in\C^3: r(z) <0\right\}$ defined by

\begin{equation}\label{E:r}
r(z_1,z_2, z_3)= \text{Re}(z_1)+ |z_2|^2\, \text{Re}\left(z_2^2-z_3^3\right) + |z_3|^2\, \text{Re}\left(z_3^2\right) - \text{Re}\left(z_2^2\bar z_3\right).
\end{equation}
Note that $ p= (0,0,0)\in b\Omega$.
\medskip

{\bf Claim 1.} $\Delta_1(p)=\infty$.

\begin{proof} Consider the curve $\gamma(t)=\left(0, t^3, t^2\right)$, $t\in\C$. Then

\begin{align*}
(r\circ\gamma)(t)&= 0+\left|t^3\right|^2\, \text{Re}\left(t^6-t^6\right) +\left|t^2\right|^2\frac{t^4+\bar t^4}{2} -\frac{t^6\bar t^2+t^2\bar t^6}{2} \\
&= \frac12|t|^4\left(t^4+\bar t^4\right) - \frac12|t|^4\left(t^4+\bar t^4\right) =0.
\end{align*}
Thus (the image of) $\gamma$ lies in $b\Omega$, so $\Delta^1(p)=\infty$.
\end{proof}
\medskip

{\bf Claim 2.} $\Delta_1^{reg}(p)=4$.

\begin{proof} First consider the non-singular curve $\tilde\gamma(t)=(0,t,0)$. Since
$(r\circ\tilde\gamma)(t) =|t|^2\, \text{Re}\left(t^2\right)$, $\nu(r\circ\tilde\gamma) =4$ and so $\Delta_1^{reg}(p) \geq 4$.
We now show $\nu(r\circ\gamma)(0) \leq 4$ for any nonsingular curve $\gamma(t)=\left(\gamma^1(t), \gamma^2(t), \gamma^3(t)\right)$. Without loss of generality, assume that
$\gamma^1$ is identically 0 (see Subsection \ref{SS:psc_implies_equality}).

\noindent{\it Case 1:} $\nu(\gamma^2)=\nu(\gamma^3)=1$.

In this case, there are two degree 3 terms in $r\circ\gamma$, $t^2\bar t$ and $t\bar t^2$, coming from the last term on the right-hand side of \eqref{E:r}.
All other terms in $r\circ\gamma$ are of higher order. Thus $\nu(r\circ\gamma) =3$, so these curves are irrelevant for $\Delta_1^{reg}(0)$.

For the other cases, a slight rewrite of $r$ is useful:

\begin{equation}\label{E:r_1}
r(z)= \text{Re}(z_1) +\frac12 z_2^3\bar z_2 +\frac12 z_2\bar z_2^3 -\frac12|z_2|^2\left(z_3^3+\bar z_3^3\right) + |z_3|^2\, \text{Re}\left(z_3^2\right) - \text{Re}\left(z_2^2\bar z_3\right).
\end{equation}

\noindent{\it Case 2:} $\nu(\gamma^2)=1$ and $\nu(\gamma^3)>1$.

In this case, terms $t^3\bar t$ and $t\bar t^3$ appear in $r\circ\gamma$, because of the 2nd and 3rd terms on the right-hand side of \eqref{E:r_1}.
Terms coming from the 4th and 5th term in \eqref{E:r_1} will either be identically $0$ or have degree $\geq 8$, by assumption on $\nu(\gamma)$.
The last term on the RHS of \eqref{E:r_1} can produce degree $4$ terms,
but such terms would have {\it even} holomorphic degree in $t$ or {\it even} anti-holomorphic degree in $\bar t$, so cannot cancel the terms $t^3\bar t$ and $t\bar t^3$.

Thus, $D^{3,1} \left[r\circ \gamma\right](0)\neq 0$ for these $\gamma$, so $\nu(r\circ\gamma)\leq 4$.

\noindent{\it Case 3:} $\nu(\gamma^2) >1$ and $\nu(\gamma^3)=1$.

Similar reasoning applies to this case. Again there is a $t^3\bar t$ term in $r\circ\gamma$ (and also a $t\bar t^3$ term), this time arising from the 5th term on the right-hand side of \eqref{E:r_1}.
No other degree 4 terms are possible in this case, since we are assuming $\nu(\gamma_2) >1$. 

Thus, $D^{3,1} \left[r\circ \gamma\right](0)\neq 0$ for these curves as well, so $\nu(r\circ\gamma)\leq 4$.

\end{proof}


\section{Here come the warm jets}\label{S:jets}

If $\gamma\in\sS$, the statement $\nu(r\circ\gamma)(0)=T$ means 
\medskip

\begin{itemize}
\item[(i)]  for all $a,b\in\Z^+$ such that $0\leq a+b <T$, 
$$D^{a,b} [r\circ\gamma](0) =0,\quad\text{and}$$
\item[(ii)] $\exists\,$ an $a_0, b_0$ with $a_0+b_0=T$ such that 
$$D^{a_0,b_0} [r\circ\gamma](0)\neq 0.$$
\end{itemize}

To prove Theorem \ref{T:main}, we examine how derivatives $D^{a,b} [r\circ\gamma]$ for $\gamma$ with high multiplicity are related to
lower order derivatives of $r\circ\tilde\gamma$, for $\tilde\gamma\in\sr$ built from the jets of $\gamma$. The computations are local, near some $p\in b\Omega$; henceforth assume $p=0$.

\subsection{Automatic vanishing}\label{SS:auto}

Suppose $\gamma\in\sS$ and $\nu(\gamma)=M$; the case $M=1$ is included. Because $\gamma$ is holomorphic, the chain and product rules simplify when computing $D^{a,b} [r\circ\gamma]$. The early derivatives are

\begin{equation}\label{E:(1,0)}
D^{1,0} \left[r\circ\gamma\right] = \nabla^{1,0}[r]\left(\partial^1\gamma\right),
\end{equation}

\begin{equation}\label{E:(1,1)}
D^{1,1} \left[r\circ\gamma\right]= \nabla^{1,1}[r]\left(\partial^1\gamma, \dbar^1\bar\gamma\right),
\end{equation}

\begin{equation}\label{E:(2,1)}
D^{2,1} \left[r\circ\gamma\right]=  \nabla^{1,1}[r]\left(\partial^2\gamma,\dbar^1\bar\gamma  \right) + \nabla^{2,1}[r]\left(\partial^1\gamma,\partial^1\gamma, \dbar^1\bar\gamma \right)
\end{equation}

\begin{align}\label{E:(2,2)}
D^{2,2} \left[r\circ\gamma\right]&= \nabla^{1,1}[r]\left(\partial^2\gamma, \dbar^2\bar\gamma\right) + \nabla^{1,2}[r]\left( \partial^2\gamma,\dbar^1\bar\gamma,\dbar^1\bar\gamma\right) \notag \\
&+ \nabla^{2,1}[r]\left(\partial^1\gamma, \partial^1\gamma, \dbar^2\bar\gamma\right) \notag \\
&+ \nabla^{2,2}[r]\left( \partial^1\gamma, \partial^1\gamma,\dbar^1\bar\gamma,\dbar^1\bar\gamma\right)
\end{align}
and
\begin{align}\label{E:(3,1)}
D^{3,1} \left[r\circ\gamma\right]&= \nabla^{1,1}[r]\left(\partial^3\gamma,\dbar^1\bar\gamma\right) + \frac32\, \nabla^{2,1}[r]\left(\partial^2\gamma, \partial^1\gamma,\dbar^1\bar\gamma\right) \notag \\
&+ \nabla^{3,1}[r]\left(\partial^1\gamma, \partial^1\gamma, \partial^1\gamma,\dbar^1\bar\gamma\right).
\end{align}

Since $r$ is real-valued, $D^{b,a}[r\circ\gamma]=\overline{D^{a,b}[r\circ\gamma]}$. Thus \eqref{E:(1,0)}--\eqref{E:(3,1)}  contain all derivative information about $r\circ\gamma$ of total order $\leq 4$ except for the pure derivatives $D^{a,0}[r\circ\gamma]$ with $a=2,3,4$. These are easily handled. At the same time, we pick coordinates that identify the $T^{1,0}$ part of the tangent space to $\ch$ at 0. Choose holomorphic coordinates in a neighborhood of 0 such that

\begin{align}\label{E:good_coordinates}
r_{z_1}(0)&=1, \qquad r_{z_j}(0)=0\quad\text{  for } j=2,\dots n \notag\\
\left(\frac\partial{\partial z}\right)^\alpha r(0)&=\left(\frac\partial{\partial\bar z}\right)^\alpha r(0)=0\qquad\text{ for all }2\leq |\alpha|\leq 4M.
\end{align}
Such coordinates exist by elementary analysis of the Taylor expansion of $r$. For example, see the proof of Lemma 3.2.2 in \cite{Krantz_scv_book}.
From \eqref{E:(1,0)} and \eqref{E:good_coordinates} it follows that
\begin{align}\label{E:pure_terms}
D^{a,0}\left[r\circ\gamma\right](0)=\nabla^{1,0}[r](\partial^a\gamma)(0)
\end{align}
for $2\leq a\leq 4M$. 

Return to \eqref{E:(1,1)}--\eqref{E:(3,1)}, but ignore the first term on the right-hand side of \eqref{E:(2,2)}. 
At least one factor of $\partial^1\gamma$ or $\dbar^1\bar\gamma$ appears in each of the other terms. If $M\geq 2$, these vanish at $t=0$. Expressions of this kind proliferate as the number of derivatives increases.

To organize these, we make the following

\begin{definition}\label{D:mult_based_vanishing} Let $\gamma\in\sS$.
We say a term $A_m(t)=\nabla^{a,b}[r]\left(\partial^{j_1}\gamma,\dots,\partial^{j_a}\gamma, \dbar^{k_1}\bar\gamma,\dots, \dbar^{k_b}\bar\gamma\right)(t)\in TZ_\gamma$ if $\min\{j_1,...,j_a,k_1,...,k_b\}<M$.
For a general smooth function $G(t)=G(\gamma(t))$, write $G\in\cm\cv$ if  $G(t)=\sum (A_m(t))$ with each $A_m\in{\text TZ}_\gamma$.
If $G, H$ are two such functions, write $$G=H\quad\mod\left(\cm\cv\right)$$
if $G-H\in\cm\cv$. 
\end{definition}
The symbol $\cm\cv$ is short for multiplicity-vanishing. If $M\geq 2$, the equations \eqref{E:(1,1)}--\eqref{E:(3,1)} can be written
\begin{align*}
D^{1,1} \left[r\circ\gamma\right]&=0 \quad\mod\left(\cm\cv\right), \\
D^{2,1} \left[r\circ\gamma\right]&=0 \quad\mod\left(\cm\cv\right), \\
D^{2,2} \left[r\circ\gamma\right]&=\nabla^{1,1}[r]\left(\partial^2\gamma,\dbar^2 \bar\gamma\right) \quad\mod\left(\cm\cv\right),\\
D^{3,1}\left[r\circ\gamma\right]&=0 \quad\mod\left(\cm\cv\right).
\end{align*}

\subsubsection{Higher derivatives}\label{SSS:higher}
We now compute $D^{aM, bM}[r\circ\gamma]$ for $(a,b)= (1,1), (2,1), (2,2)$ and $(3,1)$. Computation and display of the results can be done mod $\cm\cv$, as only values at $t=0$ are needed.

For $D^{M,M}$,  \eqref{E:(1,1)} implies

\begin{equation}\label{E:(M,M)}
D^{M,M} \left[r\circ\gamma\right]=D^{M-1, M-1}D^{1,1} \left[r\circ\gamma\right]= \nabla^{1,1}[r]\left(\partial^M\gamma, \dbar^M\bar\gamma \right)\quad\mod\left(\cm\cv\right).
\end{equation}
The additional derivatives must all fall on $\gamma$ or $\bar\gamma\mod\left(\cm\cv\right)$. There is no mixing of $\frac{\partial}{\partial t}$ and $\frac{\partial}{\partial\bar t}$ derivatives since $\gamma$ is holomorphic.

For $D^{2M,M}[r\circ\gamma]$, \eqref{E:(2,1)} implies

\begin{align}\label{E:(2M,M)from(2,1)}
D^{2M,M} \left[r\circ\gamma\right]&=   D^{2M-2, M-1}\left\{D^{2,1} \left[r\circ\gamma\right]\right\}\notag \\
&= D^{2M-2, M-1}\left\{\nabla^{1,1}[r]\left(\partial^2\gamma,\dbar^1\bar\gamma  \right) + \nabla^{2,1}[r]\left(\partial^1\gamma,\partial^1\gamma, \dbar^1\bar\gamma \right)\right\}\notag \\
&=  D^{2M-2, M-1}\left\{ A+B\right\},
\end{align}
where the last equality defines $A$ and $B$.
Many terms arise when $D^{2M-2, M-1}\left\{ \,\,\right\}$ is expanded. Focus on the ``factors'' of $\partial^k\gamma$ and $\dbar^\ell\bar\gamma$ appearing in these terms. For example, $A$ contains two factors, one of $\partial^2\gamma$ and one of $\dbar^1\bar\gamma$, while $B$ contains three factors, two each of $\partial^1\gamma$ and one of $\dbar^1\bar\gamma$. Call these ``factors of $\gamma$ or $\bar\gamma$'' for short.
In order for a term {\it not} to belong to $\cm\cv$, each factor of $\gamma$ and $\bar\gamma$ that appears must be differentiated at least $M$ times.

For the ${\partial}/{\partial\bar t}$ derivatives, \eqref{E:(2M,M)from(2,1)} shows all $M-1$ derivatives must fall on the single factor $\dbar^1\bar\gamma\mod\left(\cm\cv\right)$ in both $A$ and $B$. However the $\frac{\partial}{\partial t}$ derivatives fall on multiple factors of $\partial^k\gamma$ appearing in $B$ and (eventually) in $A$. For $B$, no $\frac{\partial}{\partial t}$ derivative may fall on $r\mod\left(\cm\cv\right)$: each of the two factors $\partial^1\gamma$ must be differentiated $M-1$ more times and there are only $2M-2$ total derivatives. For $A$, one $\frac{\partial}{\partial t}$ derivative may fall on $r$, but then the remaining derivatives must all be distributed between the newly created factor of 
$\partial^1\gamma$ and the originally present $\partial^k\gamma$ factor, resulting in dual factors of $\partial^M\gamma$ at the end. Thus

\begin{align}\label{E:(2M,M)}
D^{2M,M} \left[r\circ\gamma\right]&= \nabla^{1,1}[r]\left(\partial^{2M}\gamma,\dbar^M\bar\gamma\right)\notag\\
&+E_0\, \nabla^{2,1}[r]\left(\partial^M\gamma, \partial^M\gamma, \dbar^M\bar\gamma\right)
\quad\mod\left(\cm\cv\right).
\end{align}
$E_0$ is a combinatorial constant arising from the product rule; it is computed in the next section.
Notice the higher-than-multiplicity derivatives $\partial^{2M}\gamma$ in \eqref{E:(2M,M)}. This factor may or may not be 0 for a given $\gamma\in\sS$. 

Computing the $D^{2M, 2M}$ derivative is very similar, as the $\frac{\partial}{\partial t}$ and $\frac{\partial}{\partial\bar t}$ derivatives do not intermix . Calculation directly from \eqref{E:(2,2)} yields 

\begin{align}\label{E:(2M,2M)}
D^{2M,2M} \left[r\circ\gamma\right]&=\nabla^{1,1}[r]\left(\partial^{2M}\gamma,\dbar^{2M}\bar\gamma\right) \\ \nonumber
&+ F_0\, \nabla^{2,1}[r]\left(\partial^M\gamma, \partial^M\gamma, \dbar^{2M}\bar\gamma\right)\\ \nonumber
&+ F_1\, \nabla^{1,2}[r]\left(\partial^{2M}\gamma,\dbar^M \bar\gamma,\dbar^M\bar\gamma\right)\\ \nonumber
&+ F_2 \, \nabla^{2,2}[r]\left(\partial^M \gamma,\partial^M\gamma,\dbar^M \bar\gamma,\dbar^M \bar\gamma\right)\quad\mod\left(\cm\cv\right).
\end{align}
The constants $F_0, F_1, F_2\in\Z^+$ are also computed in subsection \ref{SSS:combo}.

For the final relevant derivative, $D^{3M, M}$, a variation occurs. The surplus $\frac{\partial}{\partial t}$ derivatives allows several combinations of higher-than-multiplicity derivatives of $\gamma$ to occur
inside $\nabla^{2,1}[r]$. The appearance of these (potentially) non-vanishing terms is the reason $\Delta_1(p)=\Delta_1^{reg}(p)$ does not hold without additional hypothesis.

Starting from \eqref{E:(3,1)} it follows

\begin{align}\label{E:(3M,M)}
D^{3M,M} \left[r\circ\gamma\right]&=\nabla^{1,1}[r]\left(\partial^{3M}\gamma,\dbar^{M}\bar\gamma\right) \\ \nonumber
&+G_0\, \nabla^{2,1}[r]\left(\partial^{2M}\gamma,\partial^M\gamma,\dbar^{M}\bar\gamma\right)\\ \nonumber
&+ \sum_{i=1}^{M-1} G_i\, \nabla^{2,1}[r]\left(\partial^{M+i}\gamma,\partial^{2M-i}\gamma,\dbar^{M}\bar\gamma\right)\\ \nonumber
&+ G_M\, \nabla^{3,1}[r]\left(\partial^M \gamma,\partial^M \gamma,\partial^M\gamma,\dbar^{M}\bar\gamma\right)\quad\mod\left(\cm\cv\right),
\end{align}
for constants $G_0,\dots G_M\in\Z^+$.

\subsubsection{Combinatorial constants}\label{SSS:combo}
The general forms of $D^{aM,bM}$ were derived starting from the corresponding lower-order derivatives
 \eqref{E:(2,1)}-\eqref{E:(3,1)}. However the constants in these equations are most easily computed starting from $r\circ\gamma$ itself.
 
For a term in a derivative expression to not belong to $\cm\cv$, each factor of $\gamma$ and $\bar\gamma$ present must be differentiated at least $M$ times. Consider \eqref{E:(2M,M)}:

\begin{align*}
D^{2M,M} \left[r\circ\gamma\right]&= \nabla^{1,1}[r]\left(\partial^{2M}\gamma,\dbar^M\bar\gamma\right)+E_0\, \nabla^{2,1}[r]\left(\partial^M\gamma, \partial^M\gamma, \dbar^M\bar\gamma\right)
\quad\mod\left(\cm\cv\right) \\ &=\tilde A+E_0\, \tilde B.
\end{align*} 
The factor $\partial^{2M}\gamma$ occurs when all the $\frac{\partial}{\partial t}$ derivatives of $r\circ\gamma$ fall on a single $\gamma$. This happens in only one way, so the constant in front of $\tilde A$ is 1. The two factors $\partial^M\gamma$ in $\tilde B$ occur when the $\frac{\partial}{\partial t}$ derivatives of $r\circ\gamma$ are distributed between two factors of $\gamma$. Distinguish these two factor of $\gamma$ with different symbols: $\gamma_1$ and $\gamma_2$. There is more than one way to reach a term of the form $\partial^M\gamma_1\cdot\partial^M\gamma_2$,  as the order of differentiation does not matter.
 For instance, first $\gamma_1$ can be differentiated $M$ times, followed by differentiating $\gamma_2$ $M$ times, or $\gamma_2$ could first be differentiated $M$ times, followed by $M$ derivatives of $\gamma_1$, as well as all intermediate options.  The term $\tilde B$ contains the sum of such terms, hence $E_0\neq 1$.
 
 Counting how many ways we can differentiate both $\gamma_1$ and $\gamma_2$ exactly $M$ times,  using $2M$ total derivatives, is equivalent to counting all sequences of length $2M$ with $M$ $1$'s and $M$ $2$'s. For instance, the sequence $\underbrace{1\dots 1}_M\underbrace{2\dots 2}_M$ corresponds to first differentiating $\gamma_1$ $M$ times followed by differentiating $\gamma_2$ $M$ times.  The number of all such sequences is $\frac{(2M)!}{M!M!}$. This over-counts $E_0$ by $2!$,  as all  sums in $\nabla^{2,1}$ runs from $1$ to $n$ so the roles of $1$ and $2$ can be exchange . Thus
 \begin{equation}\label{E:E_0}
E_0=\frac12 \frac{(2M)!}{M!M!}. 
\end{equation}

The same approach computes the constants in  \eqref{E:(2M,2M)}-\eqref{E:(3M,M)}, yielding

\begin{align}\label{E:constants}
F_0=\frac12 \frac{(2M)!}{M!M!},\quad F_1&=\frac12 \frac{(2M)!}{M!M!}, \quad  F_2=\frac12\frac{(2M)!}{M!M!}\frac12\frac{(2M)!}{M!M!},\notag \\
G_0=\frac12 \frac{(3M)!}{(2M)!M!}, \quad G_i&=\frac12 \frac{(3M)!}{(M+i)!(2M-i)!}, \text{ and } G_M=\frac16\frac{(3M)!}{M!M!M!}.
\end{align}

\subsection{Connected derivatives}\label{SS:connected}

We now parameterize $\gamma$. Let

\begin{equation}\label{E:parameter_gamma}
\gamma (t)=\left(t^M\, \sum_{i=0}^\infty c^1_i t^i, \dots, t^M\, \sum_{i=0}^\infty c^n_i t^i\right)
\end{equation}
for constants $c_i^q\in\C$, $q=1,\dots, n$ and $i\in\{0,1,\dots\}$. Since $\nu(\gamma)=M$, the vector $\left\langle c^1_0,\dots , c^n_0\right\rangle\neq \langle 0,\dots ,0\rangle$. 

The subscript notation for derivatives is used in the computations below. This notation makes the interaction of the $c_j^k$ in \eqref{E:parameter_gamma} -- which correspond to components of $\partial^j\gamma$ -- with derivatives of
$r$ easier to observe. The formulas \eqref{E:(M,M)}-\eqref{E:(3M,M)} applied to $\gamma$ of the form \eqref{E:parameter_gamma} result in cancellation of various factorials, as seen below.

When $\Delta_1^{reg}(p) < 4$, the computations in \ref{SSS:higher} show $\Delta_1(p)=\Delta_1^{reg}(p)$ for a general hypersurface. 

\begin{proposition}\label{P:low_type}
Let $\ch\subset\C^n$ be a smooth real hypersurface. Suppose $p\in \ch$ and $\Delta_1^{reg}(p)\leq 3$. 
Then $\Delta_1(p)\leq 3$ and $\Delta_1(p)=\Delta_1^{reg}(p)$.
\end{proposition}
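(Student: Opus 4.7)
The plan is to argue by contradiction, aiming to show $\Delta_1(p)\le\Delta_1^{reg}(p)$. Since $\Delta_1^{reg}(p)\le\Delta_1(p)$ holds automatically, this delivers both $\Delta_1(p)\le 3$ and the equality $\Delta_1(p)=\Delta_1^{reg}(p)$. Given $\gamma\in\sS$ with $M:=\nu(\gamma)$, the case $M=1$ is immediate from the definition of $\Delta_1^{reg}$, so assume $M\ge 2$ and suppose $\nu(r\circ\gamma)>3M$; equivalently $D^{a,b}[r\circ\gamma](0)=0$ whenever $a+b\le 3M$. The goal is to extract from $\gamma$ an explicit nonsingular curve $\tilde\gamma\in\sr$ with $\nu(r\circ\tilde\gamma)\ge 4$, contradicting $\Delta_1^{reg}(p)\le 3$.

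First, I would work in the good coordinates \eqref{E:good_coordinates} and parametrize $\gamma$ as in \eqref{E:parameter_gamma}. From \eqref{E:pure_terms}, $D^{a,0}[r\circ\gamma](0)=\partial^a\gamma^1(0)$ for $a\le 4M$, so the vanishing hypothesis forces $\gamma^1(t)=O(t^{3M+1})$, equivalently $c^1_i=0$ for $0\le i\le 2M$. Set $v=\langle c^1_0,\dots,c^n_0\rangle$ (the leading coefficient vector of $\gamma$, nonzero since $\nu(\gamma)=M$) and $w=\langle c^1_M,\dots,c^n_M\rangle$; the vanishing above gives $v_1=w_1=0$, so $v$ and $w$ both lie in the complex tangent hyperplane $\{z_1=0\}$ at the origin.

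The core step is to introduce the nonsingular polynomial curve $\tilde\gamma(t)=tv+t^2w$, which belongs to $\sr$ because $\tilde\gamma'(0)=v\ne 0$, and to prove the two algebraic identities
\begin{align*}
D^{M,M}[r\circ\gamma](0)&=(M!)^2\,D^{1,1}[r\circ\tilde\gamma](0),\\
D^{2M,M}[r\circ\gamma](0)&=\tfrac{(2M)!\,M!}{2}\,D^{2,1}[r\circ\tilde\gamma](0).
\end{align*}
The first is immediate from \eqref{E:(M,M)} after substituting $\partial^M\gamma(0)=M!\,v$ and $\bar\partial^M\bar\gamma(0)=M!\,\bar v$ and comparing with \eqref{E:(1,1)} applied to $\tilde\gamma$. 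The second follows from \eqref{E:(2M,M)} after substituting $\partial^{2M}\gamma(0)=(2M)!\,w$ and $\partial^M\gamma(0)=M!\,v$, together with $\partial^2\tilde\gamma(0)=2w$ and $\partial^1\tilde\gamma(0)=v$ in \eqref{E:(2,1)}; the cancellation $E_0(M!)^3=(2M)!\,M!/2$ from \eqref{E:E_0} exactly pairs the factor $2$ appearing in $\nabla^{1,1}[r](2w,\bar v)$, which is why the identity closes cleanly.

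Granting the two identities, the assumed vanishing on $\gamma$ transfers to $D^{1,1}[r\circ\tilde\gamma](0)=D^{2,1}[r\circ\tilde\gamma](0)=0$, and hence also $D^{1,2}[r\circ\tilde\gamma](0)=0$ by complex conjugation. The pure derivatives $D^{a,0}[r\circ\tilde\gamma](0)$ for $1\le a\le 3$ vanish as well, because $\tilde\gamma^1\equiv 0$ together with \eqref{E:good_coordinates} forces every term in their chain-rule expansion to be zero. Thus $D^{a,b}[r\circ\tilde\gamma](0)=0$ for all $a+b\le 3$, so $\nu(r\circ\tilde\gamma)\ge 4$, contradicting $\Delta_1^{reg}(p)\le 3$. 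The main delicate point is the second algebraic identity: the combinatorial constant $E_0$ from \eqref{E:E_0} must match exactly the factor of $2$ coming from $\partial^2\tilde\gamma(0)=2w$, otherwise the higher-order derivative term $\partial^{2M}\gamma(0)$ appearing in \eqref{E:(2M,M)} could not be absorbed by the degree-$2$ truncation $\tilde\gamma$.
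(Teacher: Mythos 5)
Your argument correctly reproduces the paper's treatment of the type-3 case: the curve $\tilde\gamma(t)=tv+t^2w$, the two matching identities relating $D^{M,M}$ and $D^{2M,M}$ of $r\circ\gamma$ to $D^{1,1}$ and $D^{2,1}$ of $r\circ\tilde\gamma$, and the observation that the combinatorial factor $E_0$ in \eqref{E:E_0} pairs exactly with the factor $2$ from $\partial^2\tilde\gamma(0)=2w$ are all precisely what the paper does in the second half of its proof. That part is fine.

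However, there is a genuine logical gap. You announce at the outset that you will prove $\Delta_1(p)\le\Delta_1^{reg}(p)$, which would indeed give the equality, but the body of the argument only establishes the weaker inequality $\Delta_1(p)\le 3$. You only treat the case $\nu(r\circ\gamma)>3M$. If $\Delta_1^{reg}(p)=2$ (which is allowed under the hypothesis, and is the lowest possible value since any curve tangent to $T^{1,0}(\ch;p)$ already has $\nu(r\circ\gamma)\ge 2$), your argument rules out $\Delta_1(p)>3$ but leaves open the possibility that $\Delta_1(p)$ is, say, $5/2$ or $3$, since the order of contact $O(\gamma;p)=\nu(r\circ\gamma)/\nu(\gamma)$ can be any rational. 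In that case the asserted equality $\Delta_1(p)=\Delta_1^{reg}(p)$ fails to follow. The paper handles this by treating the cases $\Delta_1^{reg}(p)=2$ and $\Delta_1^{reg}(p)=3$ separately: for the first, one assumes $\nu(r\circ\gamma)>2M$ and uses only the $D^{M,M}$ identity (together with \eqref{E:pure_matching}) to extract a regular curve $\hat\gamma(t)=tv$ with $\nu(r\circ\hat\gamma)\ge 3$, contradicting $\Delta_1^{reg}(p)=2$. You already prove the needed $D^{M,M}$ identity, so the repair is short: add the parallel contradiction argument starting from $\nu(r\circ\gamma)>2M$ and concluding with the linear curve $\hat\gamma$, just as the paper does. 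Without that step, the equality claim is not justified in the type-2 case.
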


\begin{proof}
The cases $\Delta_1^{reg}(p)=2$ and $\Delta_1^{reg}(p)= 3$ will be handled separately.
 
First suppose that $\Delta_1(0) >2$. Then there is a curve $\gamma$ of the form \eqref{E:parameter_gamma} satisfying $\nu(r\circ\gamma)(0) >2M$:

\begin{equation}\label{E:type2_vanishing}
D^{a,b}\left[r\circ\gamma\right](0)=0 \qquad\text{ for all }0\leq a+b\leq 2M.
\end{equation}
Consider the non-singular curve $\hat\gamma(t)= \left(c_0^1 t, \dots, c_0^n t\right)$. It follows from \eqref{E:pure_terms} that 

\begin{equation}\label{E:pure_matching}
D^{M,0}\left[r\circ\gamma\right](0) =M! D^{1,0}\left[r\circ\hat\gamma\right](0).
\end{equation}
The left-hand side vanishes by \eqref{E:type2_vanishing}, so \eqref{E:pure_matching} says $D^{1,0}\left[r\circ\hat\gamma\right]=0$. $D^{0,1}\left[r\circ\hat\gamma\right]=0$ follows by conjugation.
Geometrically this says $\left\langle c^1_0,\dots , c^n_0\right\rangle$ is a complex tangent vector to $\ch$ at 0.
In the same way \eqref{E:pure_terms} shows that $D^{2,0}\left[r\circ\hat\gamma\right]=0=D^{0,2}\left[r\circ\hat\gamma\right]$.

It follows from \eqref{E:(M,M)} that

\begin{equation*}
D^{M,M}\left[r\circ\gamma\right](0) = M!M!\sum_{k,j=1}^n r_{\bar z_k z_j}\, \bar{c^k_0} c^j_0.
\end{equation*}
\eqref{E:type2_vanishing} says the left-hand side vanishes.
On the other hand, \eqref{E:(1,1)} implies

\begin{equation*}
D^{1,1} \left[r\circ\hat{\gamma}(0)\right] = \sum_{k,j=1}^n r_{\bar z_k z_j}\, \bar{c^k_0} c^j_0.
\end{equation*}
Equating right-hand sides yields $D^{1,1}\left[r\circ\hat\gamma(0)\right]=0$. Thus $\hat\gamma\in\sr$ satisfies $D^{a,b}\left[r\circ\hat\gamma(0)\right]=0$ for all $0\leq a+b\leq 2$, i.e.,
$\Delta_1^{reg}(0) >2$.

Turn to type 3 points. Assume $\gamma$, of the form \eqref{E:parameter_gamma}, satisfies $\nu(r\circ\gamma)(0) >3M$. In this case the complex line $\hat\gamma$ above is not  the right element of $\sr$.  The first term in  \eqref{E:(2M,M)} dictates a modification.  We claim that 

$$\tilde\gamma(t)=(c_0^1t+ c_M^1t^2,\dots, c_0^nt+ c_M^nt^2)$$ 
satisfies $\nu(r\circ\tilde\gamma)(0)>3$. Note $\tilde\gamma\in\sr$. 

The computations that gave \eqref{E:pure_matching} show 

$$D^{1,0}[r\circ\tilde\gamma]=D^{2,0}[r\circ\tilde\gamma]=D^{3,0}[r\circ\tilde\gamma]=0.$$
As before, $D^{0, b} [r\circ\tilde\gamma]=0$ for $b=1,2,3$ because $r$ is real-valued. Additionally, the computation giving $D^{1,1}[r\circ\hat\gamma]=0$
shows $D^{1,1}[r\circ\tilde\gamma]=0$.

It remains to consider $D^{2,1}[r\circ\tilde\gamma]$. 
Applying  \eqref{E:(2M,M)} to $\gamma$ of the form \eqref{E:parameter_gamma} gives

$$D^{2M,M}[r\circ\gamma]=(2M)!M!\sum_{k,j=1}^n r_{\bar z_k z_j} \bar c_0^k c_M^j + \frac12 (2M)!M!\sum_{\ell,k,j=1}^n r_{z_\ell\bar z_k z_j} c_0^\ell\bar c_0^k c_0^j \quad\mod\left(\cm\cv\right).$$
However \eqref{E:(2,1)}, applied to $\tilde\gamma$, yields

 $$D^{2,1}[r\circ\tilde\gamma]=\sum_{k,j=1}^n r_{\bar z_kz_j}\bar c_0^k (2c_M^j)+ \sum_{\ell,k,j=1}^n r_{z_\ell\bar z_k z_j} c_0^\ell\bar c_0^k c_0^j.$$
Simple comparison shows $D^{2,1}[r\circ\tilde\gamma](0)=\frac2{(2M)!M!} D^{2M,M}[r\circ\gamma](0)$. Since $\nu(r\circ\gamma)(0)>3M$ forces $D^{2M,M}[r\circ\gamma](0)=0$, it follows that $D^{2,1}[r\circ\tilde\gamma](0)=0$ as well.

Thus $D^{a,b}[r\circ\tilde\gamma](0)=0$ for $0\leq a+b\leq 3$, i.e., $\Delta_1^{reg}(0)>3$.

\end{proof}

\begin{remark} If $\ch$ is pseudoconvex (see \eqref{E:psc}), Proposition 5.16 of \cite{Kohn79} establishes the following equivalence:  $\Delta_1(p) =2\Leftrightarrow\Delta_1^{reg}(p)=2 \Leftrightarrow$ the Levi form of $r$ at $p$  is definite. 
Our proof of the first equivalence does not require pseudoconvexity and is elementary. The proof in  \cite{Kohn79} is more complicated, but also connects the equivalence to subelliptic multipliers at $p$.
\end{remark}

The argument in Proposition \ref{P:low_type} does not extend to type 4 points, as the example in Section \ref{S:example} shows. We identify where the breakdown occurs.
Rewriting \eqref{E:(3M,M)}, for $\gamma$ of the form \eqref{E:parameter_gamma}, yields 

\begin{align}\label{E:(3M,M)_para}
D^{3M,M}[r\circ\gamma](0)=(3M)!M!\sum_{j,k=1}^n r_{z_j\overline{z_k}}(0)\, c^j_{2M}\,\overline{c_0^k} +\frac12(3M)!M!\sum_{j,k,\ell=1}^n r_{z_jz_\ell\overline{z_k}}(0)\, c^j_M\, c^\ell_0\,\overline{c^k_0} + 
\\ \nonumber
\underbrace{\frac12(3M)!M!\sum_{j,k,\ell=1}^n r_{z_jz_\ell\overline{z_k}}(0)\,\overline{c^k_0}\left(\sum_{i=1}^{M-1} c^j_i\,c^\ell_{M-i}\right)}_{B(r;\gamma)}+\frac16(3M)!M!\sum_{j,k,\ell,m=1}^n r_{z_jz_\ell z_m\overline{z_k}}(0)\, c^j_0 c^\ell_0 c^m_0\overline{c^k_0}
\end{align}
Assume that $\nu(r\circ\gamma) >4$.
The terms labeled $B(r;\gamma)$ obstruct construction of a non-singular curve with order of contact $>4$ by the matching-of-derivatives argument used in Proposition \ref{P:low_type}.
Note the presence of mixed coefficients of the components of $\gamma^j$, $\gamma^k$, and $\gamma^\ell$ (for various $j,k,\ell$) appearing in $B(r;\gamma)$.

This turns out to be the only obstruction:

\begin{lemma}\label{L:B}
Let $\gamma\in\sS$, $\nu(\gamma)=M$, and suppose $\nu(r\circ\gamma)>4M$. 
If $B(r;\gamma)=0$ in \eqref{E:(3M,M)_para}, then there exists $\zeta\in\sr$ satisfying $\nu(r\circ\zeta)>4$.
\end{lemma}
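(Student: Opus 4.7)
The plan is to mimic the derivative-matching argument from Proposition \ref{P:low_type}, one order higher. With $\gamma$ parameterized as in \eqref{E:parameter_gamma}, define
\[
\zeta(t) = \bigl(c_0^1 t + c_M^1 t^2 + c_{2M}^1 t^3,\; \ldots,\; c_0^n t + c_M^n t^2 + c_{2M}^n t^3\bigr).
\]
Since $\langle c_0^1,\dots,c_0^n\rangle\neq\langle 0,\ldots,0\rangle$, $\zeta\in\sr$. The task is to show $D^{a,b}[r\circ\zeta](0) = 0$ for every $(a,b)$ with $a+b \leq 4$, which will give $\nu(r\circ\zeta)(0) > 4$.

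Pure derivatives are immediate: $\partial^k\zeta(0) = k!\, c_{(k-1)M}$ for $k=1,2,3$ and $\partial^k\zeta \equiv 0$ for $k\geq 4$, so \eqref{E:pure_terms} expresses $D^{a,0}[r\circ\zeta](0)$ as a scalar multiple of $D^{aM,0}[r\circ\gamma](0)$ for $a=1,2,3$, while $D^{4,0}[r\circ\zeta](0) = 0$ trivially. Conjugation, using that $r$ is real-valued, handles $D^{0,b}$. For the mixed cases $(a,b)\in\{(1,1),(2,1),(1,2),(2,2)\}$, the idea is to expand $D^{a,b}[r\circ\zeta](0)$ from \eqref{E:(1,1)}--\eqref{E:(2,2)} and compare term-by-term with the formulas \eqref{E:(M,M)}, \eqref{E:(2M,M)}, \eqref{E:(2M,2M)} applied to $\gamma$ via \eqref{E:parameter_gamma} (noting that all $\cm\cv$ terms vanish at $t=0$ because $\nu(\gamma)=M$). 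The combinatorial constants $E_0, F_0, F_1, F_2$ of \eqref{E:E_0}--\eqref{E:constants} are designed to absorb the factorials $(aM)!(bM)!$ coming from \eqref{E:parameter_gamma}, producing identities
\[
D^{a,b}[r\circ\zeta](0) = \kappa_{a,b}\, D^{aM,bM}[r\circ\gamma](0)
\]
for positive rational constants $\kappa_{a,b}$. Since $\nu(r\circ\gamma) > 4M$ makes each right-hand side vanish, each $D^{a,b}[r\circ\zeta](0)$ vanishes.

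The critical case is $(a,b) = (3,1)$, where $B(r;\gamma)$ enters. Expanding $D^{3,1}[r\circ\zeta](0)$ directly from \eqref{E:(3,1)} produces three blocks,
\[
6\sum r_{z_j\bar z_k}\, c_{2M}^j \bar c_0^k \;+\; 3\sum r_{z_jz_\ell\bar z_k}\, c_M^j c_0^\ell \bar c_0^k \;+\; \sum r_{z_jz_\ell z_m\bar z_k}\, c_0^j c_0^\ell c_0^m \bar c_0^k,
\]
which are precisely the three non-$B$ summands in the right-hand side of \eqref{E:(3M,M)_para}, each scaled by a common factor $\tfrac{(3M)!\,M!}{6}$. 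The block $B(r;\gamma)$ -- the sum over intermediate pairs $c_i^j c_{M-i}^\ell$ for $1\leq i\leq M-1$ -- is \emph{absent} on the $\zeta$-side because $\zeta$ carries only the jets $c_0, c_M, c_{2M}$. Hence
\[
D^{3,1}[r\circ\zeta](0) = \frac{6}{(3M)!\,M!}\bigl(D^{3M,M}[r\circ\gamma](0) - B(r;\gamma)\bigr),
\]
and both $D^{3M,M}[r\circ\gamma](0)$ and $B(r;\gamma)$ vanish by hypothesis. Conjugation gives $D^{1,3}[r\circ\zeta](0) = 0$, completing the check.

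The main obstacle is bookkeeping: one must verify that the constants $E_0, F_0, F_1, F_2, G_0,\dots,G_M$ cancel exactly against the factorials produced by differentiating the cubic polynomial $\zeta$, and that no summand outside $\cm\cv$ on the $\gamma$-side is overlooked in the comparison. No further conceptual input is required; the entire proof is essentially a dictionary between the high-multiplicity derivatives $D^{aM,bM}[r\circ\gamma](0)$ and the low-order derivatives $D^{a,b}[r\circ\zeta](0)$ of the jet skeleton, and $B(r;\gamma)$ is precisely the middle summand of \eqref{E:(3M,M)_para} that the skeletal curve $\zeta$ cannot reproduce.
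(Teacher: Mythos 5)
Your proposal is correct and takes essentially the same approach as the paper: the same skeleton curve $\zeta(t)=\bigl(c_0^j t + c_M^j t^2 + c_{2M}^j t^3\bigr)_{j=1}^n$, the same dictionary matching $D^{a,b}[r\circ\zeta](0)$ against $D^{aM,bM}[r\circ\gamma](0)$ via the formulas of Subsection~\ref{SS:connected}, and the same observation that $B(r;\gamma)$ is exactly the piece of \eqref{E:(3M,M)_para} with no counterpart in $D^{3,1}[r\circ\zeta](0)$. Your constant $\frac{6}{(3M)!\,M!}$ is the reciprocal form of the paper's $\frac{(3M)!\,M!}{3!}$, so the two accounts agree.
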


\begin{proof}
For $\gamma$ of the form \eqref{E:parameter_gamma}, define

\begin{equation*}
\zeta(t)=\left(c^1_0 t+c^1_M\, t^2+ c^1_{2M}\,t^3,\dots,c^n_0 t+c^n_M\,t^2+ c^n_{2M}\,t^3\right). 
\end{equation*}
Since $\left\langle c^1_0,\dots , c^n_0\right\rangle\neq \langle 0,\dots ,0\rangle$,  $\zeta\in\sr$. 

The same computations used in Proposition \ref{P:low_type} show
\begin{equation*}
\frac{ (aM)!(bM)!}{a!b!}D^{a,b}[r\circ \zeta](0)=D^{aM, b M} [ r\circ \gamma](0)=0
\end{equation*} for $0\leq a+b\leq3$.
A computation using \eqref{E:(2,2)} and \eqref{E:(2M,2M)} shows that 

$$\frac{(2M)!(2M)!}{2!2!}D^{2,2}[r\circ\zeta](0)=D^{2M,2M}[r\circ\gamma](0)=0 .$$
This computation is essentially the same as the one showing $M!M!D^{1,1}[r\circ\hat\gamma](0)=D^{M,M}[r\circ\gamma](0)$  in Proposition \ref{P:low_type}.
Note no extra assumptions were needed to match $D^{aM,bM}$ derivatives of $r\circ\gamma$ with $D^{a,b}$ derivatives of $r\circ\zeta$ in these cases.

The remaining derivatives to consider are $D^{3,1}[r\circ\zeta](0)$ and $D^{3M,M}[r\circ\gamma](0)$. Focusing on the terms $B(r; \gamma)$ in \eqref{E:(3M,M)}, note the appearance of derivatives of order not a multiple of the multiplicity $M$. These are the problematic terms.

Use \eqref{E:(3,1)} to compute $D^{3,1}[r\circ\zeta](0)$:

\begin{align*}
 D^{3,1}[r\circ\zeta](0)= &\sum_{k,j} r_{\bar z_k z_j}(0) (3!c_{2M}^j) \bar c_0^k + \frac32\sum_{\ell,k,j=1}^n r_{z_\ell\bar z_k z_j}(0) c_0^\ell \bar c_0^k (2c_M^j) \\
&+ \sum_{m,\ell,k,j=1}^n r_{z_j z_\ell z_m\bar z_k}(0)\, c_0^m c_0^\ell \bar c_0^k c_0^j .
\end{align*}
Comparing this to \eqref{E:(3M,M)_para}, it follows that 

$$\frac{(3M)!M!}{3!}D^{3,1}[r\circ\zeta](0)=D^{3M,M}[r\circ\gamma](0)-B(r;\gamma).$$ 

Therefore if $B(r;\gamma)=0$, $D^{3,1}[r\circ\zeta](0)=0$, since $\nu(r\circ\gamma)>4M$. Thus $\nu(r\circ\zeta) >4$, completing the proof.
\end{proof} 

\begin{remark}\label{R:higher}
     When seeking an extension of Theorem \ref{T:main} for $\Delta_1^{reg}(p) >4$, higher order analogs of $B(r;\gamma)$ arise. For instance in the type 6 case, analogs of  $B(r;\gamma)$ 
    corresponding to derivatives $D^{4,1}, D^{5,1}, D^{4,2}, D^{3,2}$, and $D^{3,3}$ occur. Label these  $B_{4,1}, B_{5,1}, B_{4,2}, B_{3,2}$, and $B_{3,3}$.
    In each case, $B_{a,b}$ represents the difference between $D^{aM,bM}$ derivatives of $r\circ\gamma$, where $\nu(\gamma)=M$, and $D^{a,b}[r\circ \zeta]$
    for a natural non-singular curve $\zeta$ constructed from $\gamma$. If all $B_{a,b}=0$, a straightforward modification of the proof of Lemma \ref{L:B} yields the correspondingly modified conclusion.
\end{remark}


\section{Proof of Theorem \ref{T:main}}\label{S:proof}

\subsection{Pseudoconvexity}\label{SS:psc}
Let $\ch\subset \C^n$ be a smooth real hypersurface, $q\in\ch$, and let $\rho$ be a defining function for $\ch$ near $q$.
The complex tangent space, $T^{1,0}(\ch; q)$,  to $\ch$ at $q$ is defined to be
all $\xi=(\xi_1,\dots ,\xi_n)\in\C^n$ satisfying $\sum_{j=1}^n\frac{\partial \rho}{\partial z_j}(q)\xi_j=0$. The vector space $T^{1,0}(\ch; q)$
is independent of the choice of defining function.

The Levi form associated to $\ch$ is a certain quadratic form on $T^{1,0}(\ch; q)$; pseudoconvexity is the condition that the Levi form is semi-definite. 
There is an arbitrary choice of sign for this semi-definiteness, which we now fix. Say $\ch$ is {\it pseudoconvex} if there exists a defining function
$r$ of $\ch$ such that

\begin{equation}\label{E:psc}
\sum_{j,k=1}^n\frac{\partial^2 r}{\partial z_j\partial\bar z_k}(q)\xi_j\bar\xi_k\geq 0\qquad\text{ for all } \xi\in T^{1,0}(\ch; q),
\end{equation}
for all $q\in\ch$. $\ch$ is pseudoconvex near $p\in \ch$ if \eqref{E:psc} holds for all $q\in U\cap\ch$,
$U$ a neighborhood of $p$.

Pseudoconvexity implies the existence of good holomorphic coordinates near $p$:

\begin{proposition}\label{P:psc_main} Let $\ch\subset\C^n$ be a pseudoconvex smooth real hypersurface and $p\in\ch$. 

There exists a defining function $r$ for $\ch$,  coordinates $(z_1, z_2,\dots , z_n)=\left(z_1, z'\right)$, and constants $\kappa_2,\dots ,\kappa_n\geq 0$, $\lambda_{jk\ell}\in\C$, such that $p=(0,\dots ,0)$ and

\begin{align}\label{E:normal_r}
r(z)= 2\text{Re }z_1 +\sum_{j=2}^{n} \kappa_{j} \left|z_j\right|^2 +\text{Re }\left(\sum_{j,k,\ell=1}^{n} \lambda_{jk\ell}\, z_j z_j \bar z_\ell \right)
 +\co\left(\left|\text{Im }z_1\right|, \left|z'\right|^4\right)
\end{align}

\end{proposition}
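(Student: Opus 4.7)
The plan is to reach the stated normal form by a finite sequence of coordinate changes and defining-function replacements, concluding with an appeal to pseudoconvexity. First, translate $p$ to $0$ and apply a $\C$-linear unitary rotation of $\C^n$ so that the complex normal direction at $0$ coincides with the $z_1$-axis; rescaling $r$ by a nonzero real constant then yields $r(z) = 2\text{Re}\,z_1 + O(|z|^2)$, with $r_{z_j}(0) = 0$ for $j \geq 2$. Since $r_{z_1}(0) \neq 0$, the implicit function theorem writes $\ch$ as a graph $\text{Re}\,z_1 = g(\text{Im}\,z_1, z', \bar z')$ with $g$ smooth and vanishing to second order at $0$; replacing $r$ by the equivalent defining function $2\text{Re}\,z_1 - 2g$ and splitting $g(\text{Im}\,z_1, z', \bar z') = g_0(z', \bar z') + \text{Im}\,z_1 \cdot g_1(\text{Im}\,z_1, z', \bar z')$ puts $r$ in the form
$$r(z) = 2\text{Re}\,z_1 + \psi(z', \bar z') + \co(|\text{Im}\,z_1|),$$
where $\psi := -2g_0$ is a real smooth function of $(z', \bar z')$ vanishing to second order. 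All quadratic and cubic terms of the original $r$ involving $z_1$ or $\bar z_1$ (such as $|z_1|^2$ or $z_1\bar z_j$) have now been absorbed into the $\co(|\text{Im}\,z_1|)$ error.

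Next, bi-grade the Taylor expansion $\psi = \psi^{(1, 1)} + 2\text{Re}\,\psi^{(2, 0)} + 2\text{Re}\,\psi^{(3, 0)} + 2\text{Re}\,\psi^{(2, 1)} + \co(|z'|^4)$, where $\psi^{(a, b)}$ is a homogeneous polynomial of bidegree $(a, b)$ in $(z', \bar z')$; reality of $\psi$ forces $\overline{\psi^{(a, b)}} = \psi^{(b, a)}$, and the normalization $r_{z_j}(0) = 0$ eliminates $\psi^{(1, 0)}$. Apply the biholomorphism $\tilde z_1 := z_1 + \psi^{(2, 0)}(z') + \psi^{(3, 0)}(z')$, $\tilde z' := z'$. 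This absorbs the pure holomorphic and anti-holomorphic polynomials $2\text{Re}\,\psi^{(2, 0)}$ and $2\text{Re}\,\psi^{(3, 0)}$ into $2\text{Re}\,\tilde z_1$. The induced substitution $\text{Im}\,z_1 = \text{Im}\,\tilde z_1 - \text{Im}(\psi^{(2, 0)} + \psi^{(3, 0)})(\tilde z')$ inside the $\co(|\text{Im}\,z_1|)$ error generates additional terms: those carrying a factor of $\text{Im}\,\tilde z_1$ remain in $\co(|\text{Im}\,\tilde z_1|)$, while the residual pure-polynomial contributions in $(\tilde z', \overline{\tilde z'})$ are either of order $\geq 4$ or are cubic; one more degree-$\leq 3$ adjustment $\tilde z_1 \mapsto \tilde z_1 + h(\tilde z')$ eliminates any remaining pure holomorphic cubic terms. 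The outcome is
$$r = 2\text{Re}\,\tilde z_1 + \psi^{(1, 1)}(\tilde z', \overline{\tilde z'}) + 2\text{Re}\,\Psi^{(2, 1)}(\tilde z', \overline{\tilde z'}) + \co\bigl(|\text{Im}\,\tilde z_1|, |\tilde z'|^4\bigr),$$
for a (possibly updated) bidegree-$(2, 1)$ cubic form $\Psi^{(2, 1)}$. The main technical care point lies precisely in this bookkeeping: confirming that every substitutional remainder genuinely falls into the class $\co(|\text{Im}\,\tilde z_1|, |\tilde z'|^4)$ and that one round of the adjustment by $h$ is enough to clear the freshly-generated pure holomorphic debris.

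Finally, invoke pseudoconvexity. The Hermitian matrix $(a_{jk})_{j, k = 2}^n$ determined by $\psi^{(1, 1)}(w, \bar w) = \sum_{j, k = 2}^n a_{jk}\, w_j \bar w_k$ represents the Levi form of $\ch$ at $0$ on $T^{1, 0}(\ch; 0) = \{\xi_1 = 0\}$, so by \eqref{E:psc} it is positive semidefinite. A unitary linear change of the $\tilde z'$-variables diagonalizes $(a_{jk})$, producing final coordinates $(z_1, z')$ in which the quadratic part is $\sum_{j = 2}^n \kappa_j |z_j|^2$ with $\kappa_j \geq 0$. Under the same linear transformation $\Psi^{(2, 1)}$ transforms into another bidegree-$(2, 1)$ cubic form $\sum \lambda_{jk\ell} z_j z_k \bar z_\ell$ with $\lambda_{jk\ell} \in \C$ (indexed over $j, k, \ell \in \{1, \dots, n\}$, with $\lambda_{jk\ell} = 0$ whenever any index equals $1$), and the error class $\co(|\text{Im}\,z_1|, |z'|^4)$ is preserved because the transformation is linear in $z'$ and identity on $z_1$. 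This yields the claimed normal form.
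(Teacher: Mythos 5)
Your proof is correct and follows the same basic route as the paper's: reduce to a graph $\{2\mathrm{Re}\,z_1 = F\}$ via the implicit function theorem, kill the pure (harmonic) low-order Taylor terms by a holomorphic shear in $z_1$, and diagonalize the remaining bidegree-$(1,1)$ part using a unitary change of the tangential variables, with pseudoconvexity supplying nonnegativity of the eigenvalues. The one structural difference is the order of operations: the paper first passes to the normalized coordinates \eqref{E:good_coordinates} (which already eliminate all pure $z$- and $\bar z$-derivatives of the defining function up to high order) and then applies the implicit function theorem, so that $F$ automatically has no pure terms through degree $3$ and no further shears are needed; you instead apply the implicit function theorem first and then shear away the pure parts $\psi^{(2,0)}$ and $\psi^{(3,0)}$. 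This forces you into the extra bookkeeping you flag — tracking what the substitution $\mathrm{Im}\,z_1 = \mathrm{Im}\,\tilde z_1 - \mathrm{Im}\,Q(\tilde z')$ does to the $\co(|\mathrm{Im}\,z_1|)$ remainder and verifying that one additional degree-$3$ adjustment suffices. That verification is indeed fine (since $g_1$ vanishes at the origin, the residual debris is of degree $\ge 3$, and the second shear by a degree-$3$ polynomial only produces degree-$\ge 4$ debris, which lands in $\co(|z'|^4)$), but the paper's order of normalization sidesteps it entirely. Both arguments are sound; yours is more self-contained, the paper's is shorter because it outsources the pure-term killing to the cited coordinate normalization.
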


\begin{proof} This is generally well-known. Start with the coordinates \eqref{E:good_coordinates} and an arbitrary defining function $\rho$ for $\ch$. Apply the implicit function theorem to express $\{\rho=0\}$ as the set 
$\left\{2\text{Re }z_1=F\left(z',\text{Im }z_1\right)\right\}$ locally near $0$. A $\C$-linear rotation of coordinates allows us to assume that $F\left(z',\text{Im }z_1\right)$
vanishes to at least order 2 at 0.
Define $r(z)= 2\text{Re }z_1 -F\left(z',\text{Im }z_1\right)$.

Taylor's theorem then gives

\begin{align*}
r(z)&= 2\text{Re }z_1 +\sum_{j,k=2}^{n} \kappa_{jk}\, z_j\bar z_k +\sum_{j,k,\ell=1}^{n} \lambda_{jk\ell}\, z_j z_j \bar z_\ell 
+ \sum_{j,k,\ell=1}^{n} \mu_{jk\ell}\, z_j \bar z_k \bar z_\ell +\co\left(\left|\text{Im }z_1\right|, \left|z'\right|^4\right)\notag \\
&= 2\text{Re }z_1 + P_2\left(z',{\bar z}'\right) + P_3(z,\bar z) + \co\left(\left|\text{Im }z_1\right|, \left|z'\right|^4\right).
\end{align*}

The fact that $r$ is real-valued implies that $\overline{\mu_{jk\ell}}={\lambda_{jk\ell}}$; thus $P_3(z,\bar z)$ is of the form claimed by \eqref{E:normal_r}.
The matrix $\left(\kappa_{jk}\right)$ coming from $P_2\left(z',{\bar z}'\right)$ is Hermitian, so can be diagonalized; let $A$ be a matrix such that $A^*\cdot\left(\kappa_{jk}\right)\cdot A$ is diagonal. Making the linear change of coordinates $z' =Az'$ then puts $ P_2\left(z',{\bar z}'\right)$ in the form claimed by \eqref{E:normal_r} -- note that pseudoconvexity implies the eigenvalues of the matrix $\left(\kappa_{jk}\right)$ are non-negative -- and does not disturb the form of the rest of the right-hand side of \eqref{E:normal_r}. 

\end{proof}

Our secondary use of \eqref{E:psc} is in conjunction with curves  tangent to $\ch$. 
The fact we need is proved as Proposition 2 on page 138 of \cite{DAngelo_SCVRHS}:

\begin{proposition}\label{P:psc_dangelo}
Let $\ch$ be a pseudoconvex smooth real hypersurface, $0\in\ch$, and $r$ a smooth defining function for $\ch$. Suppose that $\gamma\in\sS$
satisfies 
\begin{align*}
1< \nu(r\circ\gamma)(0) &=T  \\
D^{a,0} [r\circ\gamma](0) &=0\qquad\forall\,\, a\leq T.
\end{align*}

Then

\begin{itemize}
\item[(i)] $T=2K$, for some $K\in\Z^+$, and
\item[(ii)] in the Taylor expansion of $r\circ\gamma(t)$, the coefficient of $|t|^{2K}$ is positive.
\end{itemize}
\end{proposition}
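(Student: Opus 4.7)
My plan is to reduce both conclusions to a single pointwise non-negativity statement for the leading Taylor term of $f := r\circ\gamma$ at $0$, and then invoke pseudoconvexity to establish that non-negativity.

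\emph{Setup and Taylor expansion.} Since $r$ is real-valued, $D^{b,a}[f](0) = \overline{D^{a,b}[f](0)}$, so the hypothesis $D^{a,0}[f](0) = 0$ for $a \leq T$ extends by conjugation to $D^{0,b}[f](0) = 0$ for $b \leq T$. Writing the Taylor expansion $f(t) = \sum_{a,b \geq 0} c_{a,b}\, t^a \bar t^b$ with $c_{b,a} = \overline{c_{a,b}}$, the hypotheses give $c_{a,b} = 0$ for $a + b < T$ and $c_{a,0} = c_{0,a} = 0$ for $a \leq T$. Thus the leading non-vanishing homogeneous part of $f$ is
$$P_T(t,\bar t) = \sum_{\substack{a+b=T \\ a,b \geq 1}} c_{a,b}\, t^a \bar t^b,$$
and in polar coordinates $t = se^{i\theta}$, $P_T(se^{i\theta}) = s^T g(\theta)$ with
$$g(\theta) = \sum_{\substack{a+b=T \\ a,b \geq 1}} c_{a,b}\, e^{i(a-b)\theta}.$$
This $g$ is a real-valued trigonometric polynomial (by $c_{b,a} = \overline{c_{a,b}}$), and $g \not\equiv 0$ since $T$ is the precise vanishing order of $f$.

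\emph{Reducing (i) and (ii) to $g \geq 0$.} Both conclusions follow readily once we know $g(\theta) \geq 0$ for all $\theta$. For (i), if $T$ were odd, every term of $g$ would have $a - b$ odd, giving $g(\theta + \pi) = -g(\theta)$; combined with $g \geq 0$ this forces $g \equiv 0$, contradicting $g \not\equiv 0$. Hence $T = 2K$ is even. For (ii), the coefficient of $|t|^{2K} = t^K \bar t^K$ in the Taylor expansion of $f$ is $c_{K,K}$, which equals the zeroth Fourier coefficient
$$c_{K,K} = \frac{1}{2\pi}\int_0^{2\pi} g(\theta)\, d\theta;$$
this is strictly positive for a non-negative, not-identically-zero continuous function.

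\emph{The pseudoconvexity input: $g \geq 0$.} This is the main obstacle. I would aim for the stronger statement $f(t) \geq 0$ in a punctured neighborhood of $0$; non-negativity of $g$ then follows by examining the leading $s^T$ behavior. Working in the pseudoconvex normal form of Proposition \ref{P:psc_main}, the pure-derivative hypothesis forces $\gamma^1(t)$ to vanish to order greater than $T$, because the only pure-holomorphic contribution to $r$ at low order is $z_1$. Modulo the resulting higher-order remainder,
$$f(t) \equiv \sum_{j=2}^n \kappa_j |\gamma^j(t)|^2 + \text{Re}\!\left(\sum_{j,k,\ell \geq 2} \lambda_{jk\ell}\, \gamma^j(t)\gamma^k(t)\overline{\gamma^\ell(t)}\right).$$
The quadratic sum is non-negative since $\kappa_j \geq 0$. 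The cubic piece could a priori be negative, but pseudoconvexity at \emph{nearby} boundary points of $\ch$ forces algebraic relations among the $\lambda_{jk\ell}$ (for instance $\lambda_{j_0 k j_0} + \lambda_{k j_0 j_0} = 0$ whenever $\kappa_{j_0} = 0$, obtained by requiring non-negativity of the first-order perturbation of the Levi form in the direction $z_k$). Aligning these relations with the leading direction $\eta$ of $\gamma'$ rules out negative cubic contributions at the order of the leading quadratic, and an inductive analysis handles higher-order contributions. The hard part will be organizing this full set of pseudoconvexity constraints into a clean proof of the pointwise inequality.
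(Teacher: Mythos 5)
The paper does not prove this proposition; it is cited verbatim from D'Angelo's book (Proposition~2 on page~138 of \cite{DAngelo_SCVRHS}), with only the remark that it generalizes the evenness/positivity statement for the leading jet of a subharmonic function with no pure terms. So the comparison here is with the correctness of your argument rather than with an explicit proof in this paper.

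Your reduction of (i) and (ii) to the claim $g\geq 0$ --- equivalently, non-negativity of the leading homogeneous polynomial $P_T$ --- is the fatal step: that claim is false. Take, for example,
\begin{equation*}
r(z_1,z_2)=2\operatorname{Re}z_1+3|z_2|^4-4|z_2|^2\operatorname{Re}\!\left(z_2^2\right),\qquad \gamma(t)=(0,t).
\end{equation*}
Here $r_{z_2\bar z_2}=12|z_2|^2-12\operatorname{Re}(z_2^2)\geq 0$ and $r_{z_1\bar z_k}\equiv 0$, so $r$ is plurisubharmonic near $0$; in particular the hypersurface is pseudoconvex. The curve $\gamma$ has $T=4$, no pure terms, and the conclusion of the proposition holds with $c_{2,2}=3>0$. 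But $P_4(t)=3|t|^4-4|t|^2\operatorname{Re}(t^2)$, i.e.\ $g(\theta)=3-4\cos 2\theta$, which equals $-1$ at $\theta=0$. Hence neither $f\geq 0$ near $0$ nor $g\geq 0$ holds, and both your evenness argument and your Fourier-coefficient argument --- which you correctly show \emph{would} follow from $g\geq 0$ --- collapse. The right intermediate statement in this circle of ideas is not non-negativity of $P_T$ but non-negativity of its Laplacian $\partial_t\partial_{\bar t}P_T$ (i.e.\ $P_T$ is subharmonic): if $T$ is odd then $\partial_t\partial_{\bar t}P_T$ is an odd-degree homogeneous non-negative polynomial, hence identically zero, so $P_T$ is harmonic with no pure part and must vanish; and if $T=2K$ then the mean of $\partial_t\partial_{\bar t}P_T$ over the unit circle is $K^2c_{K,K}\geq 0$, with equality forcing the same contradiction. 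That is the mechanism underlying D'Angelo's proof. Your plan attacks the wrong inequality, so the pseudoconvexity analysis would have to be redirected.

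Independently of the above, your third paragraph is a sketch, not a proof. You record one sample constraint on the $\lambda_{jk\ell}$ from perturbing the Levi form, note that the $\co(|z'|^4)$ tail of the normal form is being ignored (it cannot be when the multiplicity $M$ satisfies $4M\leq T$), and end by acknowledging that ``the hard part'' of assembling the inequality is not done. Even if the target inequality were the correct one, this step would still be an unfilled gap, and it is exactly the content of the cited result.
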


Proposition \ref{P:psc_dangelo} is an extension of the following fact. If $u(t)$ is a smooth, subharmonic function with no pure terms in its Taylor expansion and $u$ vanishes to finite order at 0, then the order of vanishing is even ($2K$ above) and
the coefficient of $|t|^{2K}$ in the Taylor expansion of $u$ is positive.

\subsection{Type 4: pseudoconvexity implies equality}\label{SS:psc_implies_equality}

We now prove Theorem \ref{T:main}. The coordinates given by Proposition \ref{P:psc_main} are used throughout, in particular $p$ is the origin.

Because of Proposition \ref{P:low_type}, it suffices to consider the case $\Delta_1^{reg}(0) =4$. Suppose that $\Delta_1(0) > 4$. Then there is a curve $\gamma$ of the
form \eqref{E:parameter_gamma},

\begin{equation}\label{E:parameter_gamma1}
\gamma (t)=\left(t^M\, \sum_{i=0}^\infty c^1_i t^i, \dots, t^M\, \sum_{i=0}^\infty c^n_i t^i\right),
\end{equation}
with $\left\langle c^1_0,\dots , c^n_0\right\rangle\neq \langle 0,\dots ,0\rangle$ and $M\geq 2$, satisfying $\nu(r\circ\gamma) > 4M$. We will show there is a curve $\zeta\in\sr$ with
$\nu(r\circ\zeta) >4$, contradicting the assumption that $\Delta_1^{reg}(0) =4$.

We first reduce the complexity of $\gamma$. Instead of a general curve satisfying $\nu(r\circ\gamma) > 4M$, we may consider one with {\it maximal} vanishing order. It  follows from the proof of Theorem 1 on page 127-128 of \cite{DAngelo_SCVRHS} that there is such a curve, $\tilde\gamma=\left(\tilde\gamma^1,\dots, \tilde\gamma^n\right) $, lying in the complex tangent space  $T^{1,0}(\ch; 0)$. In the coordinates \eqref{E:good_coordinates}, $T^{1,0}(\ch; 0)=\{z_1=0\}$, so this implies $\tilde\gamma^1\equiv 0$. Thus, without loss of generality, we assume that $\gamma$ is of the form

\begin{equation}\label{E:parameter_gamma2}
\gamma (t)=\left(0, t^M\, \sum_{i=0}^\infty c^2_i t^i, \dots, t^M\, \sum_{i=0}^\infty c^n_i t^i\right),
\end{equation}
at the onset.

Next we infer information about the form of $r$ in \eqref{E:normal_r}:  

\begin{lemma}\label{L:1} If $\lambda_{j_0k_0\ell_0} \neq 0$ in \eqref{E:normal_r}, for some $j_0, k_0, \ell_0$ -- not necessarily distinct -- then at least one of the constants $\kappa_{j_0}$, $\kappa_{k_0}$, or $\kappa_{\ell_0}$ must be strictly positive. 

(They may all be strictly positive.)
\end{lemma}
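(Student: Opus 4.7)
The natural strategy is proof by contradiction using the parity conclusion of Proposition \ref{P:psc_dangelo}. Assume $\lambda_{j_0 k_0 \ell_0}\neq 0$ with $\kappa_{j_0}=\kappa_{k_0}=\kappa_{\ell_0}=0$ (implicitly $j_0,k_0,\ell_0\in\{2,\ldots,n\}$, so the $\kappa$'s are defined; terms involving index $1$ in the cubic form can be absorbed into the remainder via the equation of $\ch$). The goal is to produce a non-constant $\gamma\in\sS$ for which $r\circ\gamma$ vanishes at $0$ to order exactly $3$ while all pure holomorphic derivatives $D^{a,0}[r\circ\gamma](0)$ up to that order vanish; Proposition \ref{P:psc_dangelo} would then force $\nu(r\circ\gamma)(0)$ to be even, a contradiction.

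The candidate curve is the linear one supported only on the three coordinates indexed by $S:=\{j_0,k_0,\ell_0\}$: set $\gamma^q(t)=a_q t$ for $q\in S$ (with $a_q\in\C$ to be selected) and $\gamma^q\equiv 0$ otherwise, so in particular $\gamma^1\equiv 0$. Substituting into \eqref{E:normal_r}: the term $2\operatorname{Re}z_1$ dies; the Levi-form sum $\sum_{j\geq 2}\kappa_j|z_j|^2$ vanishes because only indices $q\in S$ can contribute and $\kappa_q=0$ for those; the remainder $\co(|\operatorname{Im}z_1|,|z'|^4)$ reduces to $O(|t|^4)$ since $z_1\equiv 0$ along $\gamma$; and the cubic piece collapses to $\operatorname{Re}(P(a)\,t^2\bar t)$ with
$$P(a):=\sum_{j,k,\ell\in S}\lambda_{jk\ell}\,a_j a_k\bar a_\ell.$$
Thus $r\circ\gamma(t)=\operatorname{Re}(P(a)t^2\bar t)+O(|t|^4)$.

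The crucial step is to choose $a$ making $P(a)\neq 0$. After symmetrizing $\lambda$ in its first two indices (an innocuous normalization, since $z_jz_k=z_kz_j$), the hypothesis $\lambda_{j_0k_0\ell_0}\neq 0$ exhibits $P$ as a non-trivial polynomial in the real and imaginary parts of the $a_q$, so some admissible $a$ realizes $P(a)\neq 0$. For this $\gamma$ one reads off $\nu(r\circ\gamma)(0)=3$; and by direct inspection of the displayed expansion (no pure $t^a$ term appears below order $4$), $D^{a,0}[r\circ\gamma](0)=0$ for $1\leq a\leq 3$. Proposition \ref{P:psc_dangelo} then delivers the desired contradiction, since $3$ is odd.

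The main obstacle is really just the non-vanishing of $P$; once $\lambda$ is taken symmetric in its first two indices, this reduces to the standard fact that a non-trivial polynomial in $\C^{|S|}$, viewed as a polynomial in real and imaginary parts, cannot vanish identically. Everything else is direct substitution into the normal form \eqref{E:normal_r} followed by an invocation of Proposition \ref{P:psc_dangelo}.
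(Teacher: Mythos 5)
Your proposal is correct and follows essentially the same route as the paper's proof: both construct the linear curve supported on the coordinates indexed by $\{j_0,k_0,\ell_0\}$, observe that if all three $\kappa$'s vanish then $r$ restricted to this curve has a nonzero $t^2\bar t$ term and no pure $t^a$ terms below order $4$, and then invoke the parity statement of Proposition~\ref{P:psc_dangelo} to get a contradiction from $\nu(r\circ\gamma)(0)=3$ being odd. You spell out a couple of details the paper leaves implicit (the role of symmetry of $\lambda_{jk\ell}$ in the first two indices, the reduction to indices $\geq 2$, and the explicit non-triviality of the polynomial $P(a)$), but the argument is the same one.
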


\begin{proof}Consider the curve $\eta: t\longrightarrow\left(0, \eta^2(t),\dots , \eta^n(t)\right)$ with 

\begin{align*}
\eta^i(t)= \begin{cases}
    \delta_i t, & i\in\{j_0, k_0, \ell_0\}\\
    0, & i\notin\{j_0, k_0, \ell_0\}
  \end{cases}\;\;,
\end{align*}
for constants $\delta_i\neq 0$ to be chosen. Choose $\delta_i$ such that $r\circ\eta(t)$ has a non-zero $t^2\bar t$ term. Such $\delta_i$ can always be chosen as terms in $P_3(z',\bar z')$ involving only $z_{j_0},z_{k_0},$ or $z_{\ell_0}$  form a non-zero polynomial. If $P_2\left(z',{\bar z}'\right) $ contains no terms involving only  $z_{j_0}$, $z_{k_0}$, and $z_{\ell_0}$, $\nu(r\circ\eta)(0)=3$, contradicting Proposition \ref{P:psc_dangelo} (i). 
\end{proof}

However, a non-vanishing quadratic term in \eqref{E:normal_r} forces higher order vanishing of that component of $\gamma$:

\begin{lemma}\label{L:2}
If $\kappa_s >0$ in  \eqref{E:normal_r} for some $s$, and $\gamma$ of the form \eqref{E:parameter_gamma2} satisfies $\nu(r\circ\gamma) > 4M$, then

$$c_i^s=0\quad\text{ for }\quad i=0,\dots ,M-1.$$
\end{lemma}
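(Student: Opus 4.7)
The plan is to examine the Taylor expansion of $r\circ\gamma(t)$ in the two coordinates $t,\bar t$ and exploit the fact that $\nu(r\circ\gamma)>4M$ forces every coefficient $t^{M+a}\bar t^{M+b}$ with $a+b\le 2M$ to vanish. The key is to isolate a range of bi-degrees where only the Levi form quadratic piece $\sum_j\kappa_j|z_j|^2$ from \eqref{E:normal_r} contributes; the cubic, quartic, and $\mathcal{O}(|\operatorname{Im} z_1|)$ pieces will be excluded on degree grounds (and because $\gamma^1\equiv 0$ kills the $\operatorname{Im} z_1$ piece outright).

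First I would compute, for $0\le a,b\le M-1$, the coefficient of $t^{M+a}\bar t^{M+b}$ in $r\circ\gamma$. From the quadratic term $\kappa_j|\gamma^j|^2=\kappa_j t^M\bar t^M\sum_{i,i'}c^j_i\overline{c^j_{i'}}t^i\bar t^{i'}$, this coefficient receives the contribution $\sum_j\kappa_j c^j_a\,\overline{c^j_b}$. The cubic term $\lambda_{jk\ell}z_jz_k\bar z_\ell$ contributes a monomial of holomorphic degree $\ge 2M$, hence cannot reach a monomial with holomorphic degree $M+a<2M$; its conjugate similarly cannot reach anti-holomorphic degree $M+b<2M$. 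The quartic remainder $\mathcal{O}(|z'|^4)$ has total degree $\ge 4M>2M+a+b$. Thus in this bi-degree range the quadratic piece is the only contributor.

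Since the total degree is $2M+a+b\le 4M-2<4M$ and $\nu(r\circ\gamma)>4M$, I conclude
\begin{equation*}
\sum_{j=2}^{n}\kappa_j\,c^j_a\,\overline{c^j_b}=0\qquad\text{for all }0\le a,b\le M-1.
\end{equation*}
Now I would view this as the vanishing of the $M\times M$ Hermitian matrix $\sum_j\kappa_j v_jv_j^*$, where $v_j=(c^j_0,\ldots,c^j_{M-1})^T\in\mathbb{C}^M$. Each summand $\kappa_j v_jv_j^*$ is positive semi-definite because $\kappa_j\ge 0$ (this is where pseudoconvexity of $\ch$ enters, via Proposition \ref{P:psc_main}). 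A sum of positive semi-definite matrices vanishes iff each summand vanishes, so $\kappa_j v_j=0$ for every $j$.

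Applying this to $j=s$ and using $\kappa_s>0$ yields $v_s=0$, i.e., $c^s_i=0$ for $i=0,\ldots,M-1$, as desired. The only step requiring any vigilance is the degree accounting that keeps the cubic term out of the relevant bi-degrees; everything else is linear algebra driven by the non-negativity of the eigenvalues $\kappa_j$.
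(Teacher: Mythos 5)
Your proof is correct and rests on the same two ingredients as the paper's: the degree count that isolates the Levi form piece of \eqref{E:normal_r} in the bi-degrees $(M+a,M+b)$ with $0\le a,b\le M-1$, and the non-negativity of the $\kappa_j$ coming from pseudoconvexity. The paper argues by contradiction: it fixes the smallest index $m_0$ with $c^s_{m_0}\neq 0$, observes the single diagonal Taylor coefficient of $t^{M+m_0}\bar t^{M+m_0}$ in $r\circ\gamma$ equals $\sum_j\kappa_j|c^j_{m_0}|^2\geq\kappa_s|c^s_{m_0}|^2>0$, and concludes $\nu(r\circ\gamma)\leq 4M-2$. You instead read off all $M^2$ coefficients at once and package the conclusion as the vanishing of a sum of rank-one positive semi-definite matrices. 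That is mild overkill --- only the diagonal entries $a=b$ are needed, where the PSD argument collapses to ``a sum of non-negative reals equal to zero has every term zero'' --- but it is a clean reformulation that sidesteps the paper's somewhat loose pointwise lower bound $|r\circ\gamma(t)|\geq c|t|^{2(M+m_0)}$ (which is stated without checking that cubic contributions of total degree $3M$ cannot swamp it) and makes the role of pseudoconvexity transparent. Both proofs are valid, and yours even yields the slightly stronger statement that $c^j_i=0$ for $0\le i\le M-1$ simultaneously for every $j$ with $\kappa_j>0$.
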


\begin{proof}
Suppose not. Let $m_0$, $0\leq m_0\leq M-1$, be the smallest such integer with $c^s_{m_0}\neq 0$. Elementary expansion shows

$$\left|\gamma^s(t)\right|^2 =\left|c^s_{m_0}\right|^2\, \left|t\right|^{2(M+m_0)}+\co\left(|t|^{2(M+m_0)+1}\right).$$
Thus, in the coordinate direction $z_s$, i.e. for the curve $\sigma (t)= \left(0,\dots , \gamma^s(t),\dots, 0\right)$, it holds that

\begin{align*}
P_2\left(\sigma(t), \bar\sigma(t)\right) &= \kappa_s \left|c^s_{m_0}\right|^2\, \left|t\right|^{2(M+m_0)}+\co\left(|t|^{2(M+m_0)+1}\right) \\
&\geq c \left|t\right|^{2(M+m_0)}
\end{align*}
for $c>0$ and $t\in\C$ near the origin.

However the other $\kappa_j$ in \eqref{E:normal_r} are $\geq 0$, because of pseudoconvexity. Thus the terms $\kappa_j\left|\gamma^j(t)\right|^2$ that arise cannot cancel
the $ \kappa_s \left|c^s_{m_0}\right|^2\, \left|t\right|^{2(M+m_0)}$ term above. It follows that

\begin{align*}
P_2\left(\gamma(t), \bar\gamma(t)\right) \geq c \left|t\right|^{2(M+m_0)}.
\end{align*}
The higher-order terms in \eqref{E:normal_r} cannot interfere. Any terms coming from $P_3(z',\bar z')$ will have holomorphic degree $\geq 2M$ in $t$ or anti-holomorphic degree $\geq 2M$ in $\bar t$. Terms appearing because of degree $\geq4$ terms in $r$ will have degree $\geq 4M$. Lastly, terms involving the tangential $z_1$ direction need not be considered as $\gamma^1\equiv0$.
Consequently, 
\begin{equation*}
\left| r\circ\gamma(t)\right|\geq  c \left|t\right|^{2(M+m_0)}
\end{equation*}
as well. Since $m_0\leq M-1$, this says $\nu(r\circ\gamma)\leq 4M-2$, contradicting the assumption that $\nu(r\circ\gamma) > 4M$.
\end{proof}

When $\ch$ is pseudoconvex, it follows that $B(r;\gamma)$ in \eqref{E:(3M,M)_para} vanishes:

\begin{proposition}\label{P:last}
Let $\ch=\{r=0\}\subset\C^n$ be pseudoconvex smooth real hypersurface near $0\in\ch$, $r$ of the form \eqref{E:normal_r}, $\gamma$ of the form \eqref{E:parameter_gamma2} satisfying $\nu(r\circ\gamma)(0) > 4M$, and $B(r;\gamma)$ defined 
in \eqref{E:(3M,M)_para}.

Then $B(r;\gamma)=0$.
\end{proposition}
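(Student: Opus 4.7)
The plan is to prove $B(r;\gamma)=0$ term by term, by playing Lemma \ref{L:1} off against Lemma \ref{L:2}. First I would use the reduction $\gamma^1\equiv 0$ from \eqref{E:parameter_gamma2} to conclude $c_i^1=0$ for every $i$. Consequently any summand of $B(r;\gamma)$ in which $1$ appears among the indices $j,k,\ell$ already vanishes, so it suffices to restrict attention to $j,k,\ell\in\{2,\dots,n\}$ -- exactly the range on which the quadratic coefficients $\kappa_s$ of \eqref{E:normal_r} live and on which Lemma \ref{L:1} is meaningful (its proof requires $\eta^1\equiv 0$ for the curve to be tangent).

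Next I would fix a triple $j,k,\ell\in\{2,\dots,n\}$ and an index $i\in\{1,\dots,M-1\}$ and analyse the summand
$$r_{z_j z_\ell \bar z_k}(0)\,\overline{c_0^k}\,c_i^j\,c_{M-i}^\ell.$$
If $r_{z_j z_\ell \bar z_k}(0)=0$ there is nothing to do. Otherwise, reading off third-order derivatives from the normal form \eqref{E:normal_r} shows that $r_{z_j z_\ell \bar z_k}(0)\neq 0$ forces some cubic coefficient $\lambda_{abk}$ with $\{a,b\}=\{j,\ell\}$ to be non-zero, and then Lemma \ref{L:1} applied to the triple $(j,\ell,k)$ delivers at least one of $\kappa_j,\kappa_\ell,\kappa_k$ strictly positive. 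Each case is now killed by Lemma \ref{L:2}: $\kappa_k>0$ gives $c_0^k=0$; $\kappa_j>0$ gives $c_i^j=0$ because $i\in\{1,\dots,M-1\}\subset\{0,\dots,M-1\}$; and $\kappa_\ell>0$ gives $c_{M-i}^\ell=0$ because $M-i\in\{1,\dots,M-1\}\subset\{0,\dots,M-1\}$ as well. Summing over $(j,k,\ell,i)$ yields $B(r;\gamma)=0$.

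There is no real obstacle here beyond index bookkeeping; the delicate input is that both $i$ and its partner $M-i$ fall inside the vanishing window $\{0,\dots,M-1\}$ guaranteed by Lemma \ref{L:2}. Conceptually, the argument is a clean three-step chain driven by pseudoconvexity: Proposition \ref{P:psc_main} supplies non-negative $\kappa_s$'s; Lemma \ref{L:1} upgrades ``some $\lambda\neq 0$'' to ``some $\kappa_s>0$''; and Lemma \ref{L:2} converts that positivity into enough coefficient vanishing in $\gamma$ to annihilate every mixed term in the sum defining $B(r;\gamma)$.
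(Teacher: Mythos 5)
Your proposal is correct and follows essentially the same route as the paper: combine Lemma~\ref{L:1} (a nonzero $\lambda$ forces a positive $\kappa$) with Lemma~\ref{L:2} (a positive $\kappa_s$ kills the low-order coefficients $c_0^s,\dots,c_{M-1}^s$) to annihilate every mixed summand of $B(r;\gamma)$. You are a bit more explicit than the paper in two places -- dispensing with index $1$ via $\gamma^1\equiv 0$ before invoking Lemma~\ref{L:1}, and spelling out all three cases $\kappa_j>0$, $\kappa_\ell>0$, $\kappa_k>0$ where the paper settles one ``for specificity'' -- and your term-by-term phrasing matches the paper's subsequent remark that each summand of $B(r;\gamma)$ vanishes separately.
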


\begin{proof}
Suppose that $B(r;\gamma)\neq 0$. This implies, in particular,  that
there exist indices $j_0, k_0, \ell_0$ -- not necessarily distinct -- and an integer $0 <u<M$ such that

\begin{equation*}
r_{z_{j_0} z_{k_0} \bar z_{\ell_0}} (0)\, c_u^{j_0}\, c_{M-u}^{k_0}\, \bar c_0^{\ell_0} \neq 0.
\end{equation*}
This means that 

\begin{equation}\label{E:Bneq0}
\lambda_{j_0 k_0 \ell_0}\neq 0
\end{equation}
in $P_3(z,\bar z)$, as well as

\begin{align}\label{E:special_coeff}
c_u^{j_0}\neq 0,\quad c_{M-u}^{k_0}\neq 0,\quad c_0^{\ell_0}\neq 0.
\end{align}

However, \eqref{E:Bneq0} and Lemma \ref{L:1} force at least one of $\kappa_{j_0}$, $\kappa_{k_0}$, or $\kappa_{\ell_0}$ to be positive. For specificity, say $\kappa_{\ell_0}>0$.
Lemma \ref{L:2} then forces $c_0^{\ell_0}=0$, contradicting \eqref{E:special_coeff}. Thus $B(r;\gamma)$ must be 0, completing the proof.
\end{proof}

\begin{remark}
The proof of Proposition \ref{P:last} actually shows that each term in $B(r;\gamma)$ vanishes separately.
\end{remark}

Proposition \ref{P:last} and Lemma \ref{L:B} finish the proof for the case $\Delta^{reg}_1(0)=4$. Combining this case with Proposition \ref{P:low_type}, the proof of Theorem \ref{T:main} is complete.

\bibliographystyle{acm}
\bibliography{McnMer16}

\end{document}